\newcommand{\R}{\mathbb{R}}
\newtheorem{definition}{Definition}[section]
\newtheorem{theorem}[definition]{Theorem}
\newtheorem{lemma}[definition]{Lemma}
\newtheorem{corollary}[definition]{Corollary}
\newtheorem{example}[definition]{Example}
\let\epsilon\varepsilon
\begin{document}

	\title{ Weak compactness of sublevel sets in complete locally convex spaces} 
	\author{Pedro P\'erez-Aros \thanks{e-mail:
			pedro.perez@uoh.cl} 
		\\
		\scriptsize{}\\
		\scriptsize{Instituto de Ciencias de la Ingenier\'ia}\\ \scriptsize{Universidad de O’Higgins, Libertador Bernardo O'Higgins 611, Rancagua, Chile}
			\\
		\scriptsize{}\\
	Lionel Thibault	\thanks{e-mail: lionel.thibault@umontpellier.fr}
		\\
	\scriptsize{}\\
\scriptsize{Institut Montpelli\'erain Alexander Grothendieck} \\
\scriptsize{Universit\'e de Montpellier, 34095 Montpellier CEDEX 5, France}\\
\scriptsize{Centro de Modelamiento Matematico}\\ \scriptsize{Universidad de Chile}
}

\maketitle
\begin{abstract}
	In this work we prove that if $X$ is a complete locally convex space and $f:X\to \mathbb{R}\cup \{+\infty \}$ is a function such that $f-x^\ast$ attains its minimum for every $x^\ast \in U$, where $U$ is an open set  with respect to the  Mackey topology in $X^\ast$, then for every $\gamma  \in \mathbb{R}$ and  $x^\ast \in U$  the set $\{ x\in X : f(x)- \langle x^\ast , x \rangle \leq \gamma	\}$ is relatively weakly compact. This result corresponds to an extension of Theorem 2.4 in [J. Saint Raymond, Mediterr. J. Math. 10 (2013), no. 2, 927--940].  
 Directional James compactness theorems are also derived.
	
\end{abstract}
\textbf{Mathematics Subject Classification (2010).} 46A25, 46A04, 46A50.\\
\textbf{Keywords.} Convex functions, conjugate functions, inf-convolution, epi-pointed functions, weak compactness, inf-compact functions.

\section{Introduction} 
  We recall first the following well-known J.J. Moreau's result {(see \cite[\S8.f, p. 49]{Moreau}; or see \cite[Theorem I-14]{CastaingValadier}, \cite[Theorem 6.3.9]{Laurent}) } that we state as a theorem: 
	
\begin{theorem}\label{theorem0} 
Let $X$ be a (Hausdorff) locally convex space and $f: X\to\mathbb{R}\cup\{+\infty\}$ be a proper lower semicontinuous convex function. Given $x^*\in X^*$, all the sublevel sets  
$\textnormal{lev}_{f-x^\ast}^{\leq }(\gamma):=\{ x  \in X :  f (x)-\langle x^\ast , x\rangle \leq  \gamma \}$ are weakly compact in $X$ for all 
$\gamma\in\mathbb{R}$ if and only if the conjugate function $f^*$ of $f$ is finite at $x^*$ and 
continuous at $x^*$ with respect to the Mackey topology on $X^*$. 
\end{theorem}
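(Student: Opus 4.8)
The plan is to reduce to the case $x^{*}=0$ and then establish the two implications separately, the nontrivial one being that weak compactness of every sublevel set forces $f^{*}$ to be finite and Mackey continuous at $0$. For the reduction, put $g:=f-x^{*}$; this is again proper, lower semicontinuous and convex, its conjugate is $g^{*}=f^{*}(\,\cdot+x^{*})$, the translation $y^{*}\mapsto y^{*}+x^{*}$ is a $\tau(X^{*},X)$-homeomorphism, and the sublevel sets of $g$ are exactly the sets $\mathrm{lev}^{\le}_{f-x^{*}}(\gamma)$; so it suffices to treat $f$ with $x^{*}=0$. Note also that $f^{*}(0)=\sup_{x\in X}(-f(x))=-\inf_{X}f$, so ``$f^{*}$ finite at $0$'' means exactly that $f$ is bounded below.

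For the easy implication (Mackey continuity $\Rightarrow$ weak compactness of the sublevel sets) I would argue straight from the Fenchel--Young inequality. Continuity of the convex function $f^{*}$ at $0$ for $\tau(X^{*},X)$ gives a bound $f^{*}\le M$ on a Mackey neighborhood of $0$, which by the Mackey--Arens description of the Mackey topology contains the polar $K^{\circ}$ of some absolutely convex, $\sigma(X,X^{*})$-compact $K\subseteq X$. Then for $\gamma\in\mathbb R$, $x\in\{f\le\gamma\}$ and $x^{*}\in K^{\circ}$ one has $\langle x^{*},x\rangle\le f(x)+f^{*}(x^{*})\le\gamma+M$, and since $K^{\circ}$ is balanced, $|\langle x^{*},x\rangle|\le\gamma+M$ on $K^{\circ}$; hence $\{f\le\gamma\}\subseteq(\gamma+M)\,K^{\circ\circ}=(\gamma+M)\,K$ by the bipolar theorem ($K$ being already absolutely convex and weakly closed, and the sublevel set being empty if $\gamma+M<0$). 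As $\{f\le\gamma\}$ is weakly closed ($f$ convex and lower semicontinuous), it is a closed subset of the weakly compact set $(\gamma+M)K$, hence weakly compact.

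The real work is the converse. First, a standard argument with nested nonempty weakly compact sublevel sets and the finite intersection property shows, using properness and lower semicontinuity, that $\inf_{X}f$ is finite and attained; in particular $f^{*}(0)=-\inf_{X}f$ is finite. Subtracting this constant from $f$ and translating the domain by a minimizer --- which only shifts $f^{*}$ by an affine function and does not affect weak compactness of the sublevel sets --- I may assume $f\ge 0=f(0)$. Let $C:=\{f\le 1\}$, a weakly compact convex set containing $0$, and $C^{\circ}:=\{x^{*}\in X^{*}:\langle x^{*},x\rangle\le 1\ \forall x\in C\}$. I claim $f^{*}\le 1$ on $C^{\circ}$: given $x^{*}\in C^{\circ}$ and $x\in\mathrm{dom}\,f$, if $f(x)\le 1$ then $\langle x^{*},x\rangle-f(x)\le\langle x^{*},x\rangle\le 1$ (using $f\ge 0$ and $x\in C$), while if $f(x)>1$ then convexity and $f(0)=0$ give $f\big(x/f(x)\big)\le 1$, so $x/f(x)\in C$, hence $\langle x^{*},x\rangle\le f(x)$ and $\langle x^{*},x\rangle-f(x)\le 0$; taking the supremum over $x$ yields $f^{*}(x^{*})\le 1$. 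It remains to show $C^{\circ}$ is a Mackey neighborhood of $0$, and the point is to do this without Krein's theorem, since $X$ is not assumed complete: instead, $D:=C-C$ is convex, symmetric and contains $0$, hence absolutely convex, and is the image of $C\times C$ under subtraction, so it is $\sigma(X,X^{*})$-compact by Tychonoff's theorem and weak-to-weak continuity of subtraction; since $C\subseteq D$ we get $D^{\circ}\subseteq C^{\circ}$, and $D^{\circ}$ is a basic $\tau(X^{*},X)$-neighborhood of $0$ (polar of an absolutely convex weakly compact set). Thus $f^{*}$ is finite at $0$ and bounded above on the Mackey neighborhood $D^{\circ}$, and a convex function that is finite at a point and bounded above near it is continuous there, so $f^{*}$ is Mackey continuous at $0$. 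I expect this last direction to be the main obstacle: passing from weak compactness of one sublevel set to an actual Mackey neighborhood on which $f^{*}$ is bounded, which the positive-homogeneity normalization together with the $C-C$ device make possible in an arbitrary locally convex space.
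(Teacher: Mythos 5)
Your proof is correct. Note, however, that the paper does not prove this statement at all: Theorem \ref{theorem0} is recalled as a classical result of Moreau and simply cited, so there is no internal argument to compare yours against. Your write-up is a sound self-contained proof, and two points in it deserve emphasis. The reduction to $x^*=0$ and the renormalization $f\ge 0=f(0)$ are routine, but the key estimate $f^*\le 1$ on $C^\circ$ for $C=\{f\le 1\}$ (splitting into $f(x)\le 1$ and $f(x)>1$ and using $f(x/f(x))\le 1$) is exactly the right device to convert weak compactness of a single sublevel set into an upper bound for $f^*$ on a polar. More importantly, your use of $D=C-C$ to produce an absolutely convex weakly compact set is the correct way to avoid Krein's theorem: since Theorem \ref{theorem0} is stated for an arbitrary Hausdorff locally convex space (completeness is only imposed from Section \ref{Section2} onward), one cannot pass to the closed absolutely convex hull of $C$, whereas $C-C$ is weakly compact by Tychonoff and joint weak continuity of subtraction, and its polar is a basic Mackey neighborhood of $0$ contained in $C^\circ$. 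The converse direction via Fenchel--Young and the bipolar theorem, and the extraction of a minimizer from the finite intersection property, are both handled correctly, including the degenerate case $\gamma+M\le 0$.
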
	
	
	Conjugate functions and Mackey topology are recalled in the next section. In addition to that theorem, J. Saint Raymond provided recently in \cite[Theorem 2.4]{MR3045687}  another condition for weak compactness of sublevels in the Banach setting: 
	
\begin{theorem}\label{theorem1}
	Let $X$ be a Banach space and let $f: X\to \mathbb{R}\cup\{+\infty\}$ be a weakly lower 
	semicontinuous function. If $f-x^\ast$ attains its minimum for every $x^\ast \in  X^\ast$, then the
	sublevels $\textnormal{lev}_f^{\leq }(\gamma)=\{ x  \in X :  f (x) \leq  \gamma \}$ are weakly compact.
\end{theorem}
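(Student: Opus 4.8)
The plan is to derive the theorem from R.~C.~James' weak compactness theorem: a bounded subset $A$ of a Banach space such that every $x^\ast\in X^\ast$ attains its supremum on $A$ is relatively weakly compact. Since $f$ is weakly lower semicontinuous, $S:=\textnormal{lev}_f^{\leq}(\gamma)$ is weakly closed, so it suffices to show that $S$ is bounded and that every $x^\ast\in X^\ast$ attains its supremum over $S$. We may assume $f$ is proper and $S\neq\emptyset$ (otherwise there is nothing to prove); taking $x^\ast=0$ in the hypothesis, $\inf f$ is attained, hence $\inf f>-\infty$, and evaluating $f-x^\ast$ at a minimizer shows the conjugate $f^\ast\colon X^\ast\to\mathbb R$ is everywhere finite.

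\emph{Boundedness of $S$.} As a supremum of norm-continuous affine functions, $f^\ast$ is norm-lower-semicontinuous, and it is finite; a finite lower semicontinuous convex function on a Banach space is continuous (apply Baire's theorem to $X^\ast=\bigcup_n\{f^\ast\le n\}$ to obtain an interior point of some sublevel set, then use that a convex function bounded above near one point is locally bounded). Hence $f^\ast\le C$ on some ball $\delta B_{X^\ast}$, and for $x\in S$ and $\|x^\ast\|\le\delta$ the Fenchel--Young inequality gives $\langle x^\ast,x\rangle\le f^\ast(x^\ast)+f(x)\le C+\gamma$; taking the supremum over such $x^\ast$ yields $\delta\|x\|\le C+\gamma$. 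The same argument bounds every sublevel set of every $f-x^\ast$.

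\emph{Attainment of suprema on $S$.} First replace $f$ by its lower semicontinuous convex envelope $\bar f:=f^{\ast\ast}$: it is proper, convex, lower semicontinuous, $\bar f\le f$, $\bar f^\ast=f^\ast$, and if $\bar x$ minimizes $f-x^\ast$ then equality holds in Fenchel--Young, so $\bar x\in\partial f^\ast(x^\ast)$, whence $x^\ast\in\partial\bar f(\bar x)$ and $\bar x$ also minimizes $\bar f-x^\ast$; thus $\bar f$ satisfies the hypotheses as well, and since $S$ is weakly closed and $S\subseteq\textnormal{lev}_{\bar f}^{\leq}(\gamma)$ it is enough to treat the convex lower semicontinuous case. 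So let $f$ be convex and lower semicontinuous, fix $x^\ast\ne0$, and put $\alpha:=\sup_{x\in S}\langle x^\ast,x\rangle<\infty$. After a translation we may take $0\in\operatorname{argmin}f$, so that $0\in S$ and $\alpha\ge0$; if $\alpha=0$ the supremum is attained at $0$, while if $\alpha>0$ then $\gamma>\inf f=0$ and Slater's condition holds for the convex program $\max\{\langle x^\ast,\cdot\rangle:f\le\gamma\}$. Lagrangian duality then gives
\[
\alpha=\inf_{\mu>0}\Big(\mu\gamma+\mu f^\ast(x^\ast/\mu)\Big),
\]
and Slater's condition ensures this infimum is attained at some $\bar\mu>0$ (unless it coincides with the recession value $\lim_{\mu\to0^+}\mu f^\ast(x^\ast/\mu)$, a degenerate case discussed below). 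A minimizer $\bar x$ of $f-x^\ast/\bar\mu$ exists by hypothesis, and the optimality of $\bar\mu$ should force the complementary-slackness relation $f(\bar x)=\gamma$, so that $\bar x\in S$ and $\langle x^\ast,\bar x\rangle=\alpha$. Once every $x^\ast$ is known to attain its supremum on $S$, James' theorem gives that $S$ is relatively weakly compact, hence, being weakly closed, weakly compact.

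\emph{The main obstacle} is exactly this last matching. Writing $d(\mu):=\mu\gamma+\mu f^\ast(x^\ast/\mu)$, a perspective-function computation gives $\partial d(\bar\mu)=\{\gamma-f^{\ast\ast}(\eta):\eta\in\partial f^\ast(x^\ast/\bar\mu)\}$, so $0\in\partial d(\bar\mu)$ only produces \emph{some} $\eta\in\partial f^\ast(x^\ast/\bar\mu)$ with $f^{\ast\ast}(\eta)=\gamma$ — and $\partial f^\ast(x^\ast/\bar\mu)$ lives a priori in $X^{\ast\ast}$, so one must argue that $\eta$ can be chosen in $X$ (equivalently, that the face of $S$ exposed by $x^\ast$ meets $X$). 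This is precisely the point at which the \emph{attainment} hypothesis, and not merely the finiteness of $f^\ast$, is used. The degenerate case $\bar\mu\to0^+$ — i.e.\ $\alpha=-\lim_{t\to+\infty}m(t)/t$ with $m(t):=\min(f-tx^\ast)$ — must be handled separately, for instance by writing $S$ as the nested intersection of the strictly feasible sublevel sets $\textnormal{lev}_f^{\leq}(\gamma')$ with $\gamma'\downarrow\gamma$, or by invoking Theorem~\ref{theorem0} after upgrading the already-established norm-continuity of $f^\ast$ to Mackey continuity on $X^\ast$. Making either of these routes precise is where the argument requires the most care.
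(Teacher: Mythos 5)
Your overall frame (reduce to boundedness of $S=\textnormal{lev}_f^{\leq}(\gamma)$ plus attainment of every $x^\ast$ on $S$, then invoke James) is sound, and the boundedness step via finiteness and local boundedness of $f^\ast$ is correct. But the central step --- that every $x^\ast$ attains its supremum on $S$ --- is not actually established, and you say so yourself. The Lagrangian computation only produces some $\eta\in\partial f^\ast(x^\ast/\bar\mu)\subseteq X^{\ast\ast}$ with $f^{\ast\ast}(\eta)=\gamma$; the minimizer $\bar x\in X$ supplied by the hypothesis is a different point of that face, $f^{\ast\ast}$ is nonconstant (affine) on the face, and nothing forces $f(\bar x)=\gamma$ or even that the face meets $S$. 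Moreover the ``degenerate case'' $\bar\mu\to 0^+$ is not a marginal pathology: for $f=\delta_C$ with $C$ bounded closed convex one has $d(\mu)=\mu\gamma+\sigma_C(x^\ast)$, so the dual infimum is \emph{never} attained at a positive $\bar\mu$, and your argument as written does not even recover James' theorem for sets. Of your two proposed repairs, the second is circular: on a non-reflexive Banach space the Mackey topology $\tau(X^\ast,X)$ is strictly coarser than the norm topology, so Mackey continuity of $f^\ast$ is strictly \emph{stronger} than the norm continuity you have established, and by Moreau's theorem (Theorem \ref{theorem0}) it is equivalent to the weak compactness you are trying to prove. The first repair (letting $\gamma'\downarrow\gamma$) does not change whether $\sigma_{\textnormal{dom}f^{\ast\ast}}(x^\ast)$ equals $\sigma_{S}(x^\ast)$, so it does not remove the degeneracy either.

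The paper (following Moors) sidesteps exactly this obstruction by not applying James' theorem to $S$ at all. It applies it to the auxiliary set $A=T(\textnormal{Epi}f+\textnormal{Epi}\varphi)\cup\{(0,0)\}$, where $T(x,\alpha)=\alpha^{-1}(x,-1)$ (after normalizing $\inf_X f=1$) and $\varphi$ is a suitable auxiliary function from the class $\mathcal{E}(K)$. For a functional $(x^\ast,r)$ one studies the one-variable function $h(t)=f^\ast(tx^\ast)+\varphi^\ast(tx^\ast)-rt$, which satisfies $h(0)=-1<0$ and $\sup h>0$, so Bolzano's theorem yields $\delta>0$ with $h(\delta)=0$; the attainment hypothesis at $\delta x^\ast$ then exhibits an explicit point of $A$ where $(x^\ast,r)$ attains its maximum. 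In other words, the intermediate-value argument replaces your dual optimization over $\mu$, and the maximizer on $A$ is by construction the image of an actual minimizer in $X$, so the exposed-face/bidual issue never arises. Without an idea of this kind (or Saint Raymond's original argument), your proposal remains an outline of the difficulty rather than a proof.
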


This result can be understood as a functional counterpart of James' Theorem (\cite{MR0170192,MR0165344}). Posteriorly, W. B. Moors \cite{MR3652791} provided a shorter proof of Theorem \ref{theorem1}. 

In this work we extend Theorem \ref{theorem1} to complete locally convex spaces by using ideas  in \cite{MR3652791} and arguments from  convex analysis.  We also  make the link between the class of  epi-pointed functions and the variational property that $f-x^\ast$ attains its minimum for every $x^\ast \in U$, where $U$ is an open set with respect to the  Mackey topology. Directional James weak compactness 
theorems for sets are also derived. 

\section{Notation and preliminaries\label{Section2}}

Throughout   the paper   $X$ will be a {\em (real) complete  (Hausdorff) locally convex space  for some topology $\theta$}  and $X^{\ast }$ its topological dual. The canonical bilinear  form $\langle \cdot ,\cdot
\rangle :X^{\ast }\times X\rightarrow \mathbb{R}$ is given by $\langle x^{\ast
},x\rangle :=x^{\ast }(x)$. It will be convenient to denote $\R_+:=[0,+\infty[$. 

For a set $A\subseteq X$ (or $X^{\ast }$), we denote by $\textnormal{Int}(A)$, $%
\overline{A}$, $\textnormal{co}(A)$ and $\overline{\textnormal{co}}(A)$, the interior, the closure, the \emph{%
	convex hull} and  the \emph{closed convex hull} of $A$, respectively. The \emph{indicator}
and the \emph{support} functions of $A$ are,
respectively, $$\delta _{A}(x):=%
\begin{cases}
0\qquad & x\in A \\ 
+\infty & x\notin A,%
\end{cases} \text{ and } \sigma _{A}(\cdot):=\sup\limits_{x\in  A }\langle \cdot ,x \rangle. $$

    The \emph{polar} and \emph{strict polar} of $A$ are the sets 
\begin{align*}
A^{o}&:=\{x^{\ast }\in X^{\ast }\mid \sigma_{A}(x^\ast) \leq 1 \},\\
A^{\circ}_s&:=\{x^{\ast }\in X^{\ast }\mid \sigma_{A}(x^\ast) < 1\},
\end{align*}
 respectively.  We define the \emph{directional asymptotic cone} of $A$  as 
 \begin{equation*}
 { A_{\infty}:=\{ y \in X : \exists x\in A,\,\forall \lambda\geq 0,\,x + \lambda y \in   A   \}. }  
 \end{equation*}

The weak topology in $X$ and the weak star topology in $X^{\ast }$ are denoted by $w$ and $w^{\ast }$ 
respectively. In $X^\ast$  the  {\em Mackey topology}  is 
denoted by $\tau (X^{\ast },X)$. We recall that  $\tau (X^{\ast },X)$ is the topology on $X^\ast$ whose a basis of \mbox{$\tau(X^*,X)$-neighborhoods} of zero is constituted by the  polar sets of  all weakly compact  circled (balanced) convex sets in $X$ (see, e.g., \cite{MR1741419}).

Given a function $f:X\rightarrow \mathbb{R}\cup \{+\infty \}$, the
(effective) \emph{domain} of $f$ is the set $\textnormal{dom} f:=\{x\in X : 
f(x)<+\infty \}$. We say that $f$ is \emph{proper} if $\textnormal{dom} f\neq \emptyset $, and \mbox{$\tau$-\emph{inf-compact}} if for every $\lambda \in \mathbb{R}
$ the sublevel set $\{x\in X\mid f(x)\leq \lambda \}$ is $\tau$-compact. The \emph{%
	conjugate} of a proper function\ $f$ is  $f^{\ast }:X^{\ast
}\rightarrow \mathbb{R}\cup \{+\infty \}$ defined by 
$$
   f^{\ast }(x^{\ast }):=\sup_{x\in X}\big(\langle x^{\ast },x\rangle -f(x)\big),
$$
and the \emph{closed convex  hull} of $f$ is the function $\overline{\textnormal{co}} f$ such that 
$$
   \overline{\textnormal{co}} \big(  \textnormal{Epi}\,f \big)=\textnormal{Epi}\,\big( \overline{\textnormal{co}} f \big) ,  
$$
where $\textnormal{Epi}\,f:=\{(x,\alpha)\in X\times\mathbb{R}: f(x)\leq \alpha\}$ is the epigraph of $f$. 

  The {\em Moreau-Rockafellar subdifferential} is defined at $x\in \textnormal{dom} f$ as 

$$
    \partial f(x):= \{x^* \in X^\ast: \langle x^*,y-x\rangle +f(x)\leq f(y), \;\forall y\in X\}, 
 $$ 
hence 
\begin{equation}\label{eq-DefMRSubd}
\partial f(x)= \{ x^* \in X^* : f(x) + f^\ast(x^\ast) = \langle x^\ast 	, x\rangle	\}. 
\end{equation} 
When $f$ is not finite at $x$, one sets $\partial f(x)=\emptyset$. 
The inverse set-valued mapping $(\partial f)^{-1}$ of $\partial f$  is then the set-valued mapping from 
$X^*$  into $X$ defined for all $x^{\ast}\in X^*$ by 
\begin{align*}
\left( \partial f	\right)^{-1} (x^\ast)=\{ x \in X : x^\ast \in \partial f(x)		\}, 
\end{align*} 
and we note that 
\begin{equation}\label{eq-SubdFctConj}
     x^{\ast}\in \partial f(x) \, \Rightarrow \, x\in \partial f^{\ast}(x^\ast) . 
\end{equation}   
When $f$ is convex and finite at $x$, its directional derivative in any direction $h\in X$ 
$$
    f'(x;h)=\lim_{s\to 0^+}\frac{f(x+sh)-f(x)}{s}=\inf_{s>0}\frac{f(x+sh)-f(x)}{s} 
$$
exists in $[-\infty,+\infty]$, so noting that $f'(x;0)=0$ it follows (as well-known) that 
\begin{equation}\label{eq-SubdDirDer}
    \partial f(x)=\partial (f'(x;\cdot))(0). 
\end{equation}  
If the function $f:X\to\mathbb{R}\cup\{+\infty\}$ is proper, lower semicontinuous and convex,   one also knows that 
\begin{equation}\label{eq-SubdInvConj}
     (\partial f)^{-1}(x^{\ast})=\partial f^\ast(x^{\ast}) \quad\text{for all}\; x^{\ast}\in X^{\ast}, 
\end{equation}
where $\partial f^{\ast}(x^{\ast})$ is taken for $X^{\ast}$ endowed with the weak$^{\ast}$ topology, so its topological dual is $X$.    

 The \emph{inf-convolution} of $f$ with another function $g:X\rightarrow \mathbb{R}\cup \{+\infty \}$ is the function 
$$ 
    (f\square g)(\cdot):=\inf\limits_{z\in X}\{f(z)+g(\cdot -z)\}.   
$$

     Finally, we present the definition of an epi-pointed function in locally convex spaces.  This class of functions has been applied to obtain diverse extensions of classical results, which were known exclusively  for functions defined in Banach spaces, to arbitrary locally convex spaces   (for
more details about these  results, see \cite{MR3507100,Correa2017,Correa20182,Correa20183,Correa2018,Perez1}
and the references therein). As far as we know, this class of functions was first introduced 
in finite dimensions in \cite{MR1416513}, and the extension to locally convex spaces was posteriorly introduced in \cite{MR3509670} with the name of \emph{Mackey epi-pointed functions}. 

\begin{definition}
	\label{definitionepipointed} The function $f:X\rightarrow \mathbb{R}\cup
	\{+\infty \}$ is said to be epi-pointed if $f^{\ast }$ is proper and $\tau
	(X^{\ast },X)$-continuous at some point of its domain.
\end{definition}

\section{Main results} 
 In this section we will establish the extension of Saint Raymond' Theorem to the context of the complete locally 
convex space $X$. This will be obtained as a consequence of Theorem \ref{mainresult} which can be seen as 
the basic theorem. We will also derive from Theorem \ref{mainresult} diverse consequences with directional James conditions for weak compactness of subsets in $X$. 

    The statement of Theorem \ref{mainresult} will involve the following class of functions. 

\begin{definition}\label{definition3.1}
	Consider a set $K\subseteq X^\ast$ with $0\in K$.  We define $\mathcal{E}(K)$ as the class of all  functions  $\varphi : X\to \mathbb{R}\cup \{+\infty \}$ such that
	\begin{enumerate}[label={(\roman*)},ref={(\roman*)}]
		\item  $\varphi^*  : X^\ast \to \mathbb{R} \cup \{ +\infty \}$ with $\varphi^\ast (0)=0$; 
		\item $ K_{\infty} \subseteq \textnormal{dom}\,\varphi^\ast \subseteq K$; 
		\item  $\sup\{ \varphi^\ast (\eta x^\ast) :  \eta >0,\; \eta x^\ast \in \textnormal{dom }\,\varphi^\ast \}= +\infty$, for all $x^*\in  K \backslash K_\infty $; 
		\item $\left(  \partial \varphi\right)^{-1}(x^\ast)\neq \emptyset$ for all   
		$x^\ast \in \textnormal{dom}\,\varphi^\ast $.
	\end{enumerate}
\end{definition}	
   Concerning the above condition $(iv)$ it is worth noting that given $x^*\in X^*$ one has 
\begin{equation}\label{eq-NemptInvSubd}
(\partial \varphi)^{-1}(x^\ast)\not=\emptyset \;\Leftrightarrow \; 
 \varphi -x^* \;\text{attains its minimum on}\; X. 
\end{equation}   
\vskip 0.3cm 

 The  result of Theorem \ref{mainresult} below corresponds to an abstract extension of Theorem \ref{theorem1}, which relies on the non-emptiness of $\mathcal{E}(K)$. Its proof uses the main ideas of 
W. B. Moors \cite{MR3652791}.

\begin{theorem}\label{mainresult} 
Let $K\subseteq X^\ast$ with $0\in K$, $\mathbb{R}_{+} K =X^*$ and $\mathcal{E}(K)\neq\emptyset$, and 
	let $f: X \to \mathbb{R} \cup \{ +\infty \}$ be a function such that  $(\partial f)^{-1}(x^\ast) \neq \emptyset$ for all $x^\ast \in K$. Then for every $\gamma  \in \mathbb{R}$ and every  $\varphi \in \mathcal{E}(K)$ the sets 
	\begin{align*}
	\textnormal{Ep}_f^{\varphi}(\gamma)&:=\{ 	(x ,\alpha) 	\in X \times  \mathbb{R} : (x,\alpha) \in \textnormal{Epi}\, f +  \textnormal{Epi}\, \varphi	\text{ and } \alpha  \leq \gamma  \},\\
	{S}_{f\square\varphi}(\gamma)&:=\{ (x,\alpha ) \in X \times  \mathbb{R} :  f\square \varphi (x) \leq \alpha \leq \gamma \}, \\
	 S_f(\gamma)	&:=\{ (x,\alpha ) \in X \times  \mathbb{R} :  f(x) \leq \alpha \leq \gamma \},\\
	\textnormal{lev}_f^{\leq }(\gamma)&:=\{ x  \in X :  f (x) \leq  \gamma \}
	\end{align*}
	are relatively weakly compact.  
\end{theorem}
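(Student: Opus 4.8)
The plan is to reduce everything to the weak compactness of a single sublevel set of the inf-convolution $f\square\varphi$ for a well-chosen $\varphi\in\mathcal{E}(K)$, and then exploit the conjugate-duality characterization of Theorem~\ref{theorem0}. The key observation is that $(f\square\varphi)^\ast = f^\ast + \varphi^\ast$, so that $\textnormal{dom}(f\square\varphi)^\ast = \textnormal{dom} f^\ast \cap \textnormal{dom}\varphi^\ast$; since $\varphi\in\mathcal{E}(K)$ forces $\textnormal{dom}\varphi^\ast\subseteq K$ and $K_\infty\subseteq\textnormal{dom}\varphi^\ast$, while the hypothesis $(\partial f)^{-1}(x^\ast)\neq\emptyset$ for all $x^\ast\in K$ gives $K\subseteq\textnormal{dom} f^\ast$ (indeed $f^\ast(x^\ast)=\langle x^\ast,x\rangle-f(x)$ at a minimizer $x$), one should be able to show that $\overline{\textnormal{co}} f\square\varphi$ is a proper l.s.c.\ convex function whose conjugate $f^\ast+\varphi^\ast$ is finite and $\tau(X^\ast,X)$-continuous at $0$. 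This continuity at $0$ is where conditions (i)--(iii) in Definition~\ref{definition3.1} do the real work: (i) gives the value $0$ at $0$, and (ii)--(iii) are exactly what is needed to run the Moors-type argument showing that a convex function on $X^\ast$ which is bounded on suitable rays and has the right domain must be Mackey-continuous at the origin.

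**First I would** set $g:=\overline{\textnormal{co}}(f\square\varphi)$ and verify $g^\ast=f^\ast+\varphi^\ast$ and $g$ proper l.s.c.\ convex. **Next** I would prove the central lemma: under the standing hypotheses, $g^\ast$ is $\tau(X^\ast,X)$-continuous at $0$ with $g^\ast(0)=0$. The strategy here, adapting \cite{MR3652791} in convex-analytic language, is to argue by contradiction: if $g^\ast$ is not Mackey-continuous at $0$, one extracts a sequence (or net) $x_n^\ast\to 0$ in the Mackey topology with $g^\ast(x_n^\ast)$ bounded away from $0$, rescales using $\mathbb{R}_+K=X^\ast$ and condition (iii) to reach points where $\varphi^\ast$ blows up along a ray, contradicting finiteness of $g^\ast=f^\ast+\varphi^\ast$ on $\textnormal{dom}\varphi^\ast$ together with the fact that every $x^\ast\in\textnormal{dom}\varphi^\ast\subseteq K$ lies in $\textnormal{dom} f^\ast$. **Then**, with Mackey-continuity of $g^\ast$ at $0$ in hand, Theorem~\ref{theorem0} applied to $g$ at the point $x^\ast=0$ yields that $\textnormal{lev}_g^{\leq}(\gamma)$ is weakly compact for every $\gamma\in\mathbb{R}$.

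**Finally** I would transfer weak compactness from $\textnormal{lev}_g^{\leq}(\gamma)$ to the four sets in the statement. Since $f\square\varphi\geq g$ pointwise, $\textnormal{lev}_{f\square\varphi}^{\leq}(\gamma)\subseteq\textnormal{lev}_g^{\leq}(\gamma)$, so $S_{f\square\varphi}(\gamma)$ is contained in the weakly compact set $\textnormal{lev}_g^{\leq}(\gamma)\times[{-c},\gamma]$ for a suitable lower bound $c$ (coming from $g$ being proper and minorized by an affine function on the relevant region, using $g^\ast(0)=0$ which gives $g\geq 0$), hence is relatively weakly compact. The identity $\textnormal{Epi}(f\square\varphi)=\textnormal{Epi} f+\textnormal{Epi}\varphi$ (valid since both inf-convolutions are "exact" enough, or at least $\textnormal{Ep}_f^\varphi(\gamma)\subseteq S_{f\square\varphi}(\gamma)$ up to the $\alpha\leq\gamma$ truncation) handles $\textnormal{Ep}_f^\varphi(\gamma)$; and using $\varphi^\ast(0)=0$ together with $0\in\textnormal{dom}\varphi^\ast$, one checks $\varphi(0)\leq 0$ so that $(x,\alpha)\in S_f(\gamma)$ implies $(x,\alpha)=(x+0,\alpha)\in\textnormal{Epi} f+\textnormal{Epi}\varphi$ with $\alpha\leq\gamma$, giving $S_f(\gamma)\subseteq\textnormal{Ep}_f^\varphi(\gamma)$ and hence relative weak compactness of $S_f(\gamma)$; projecting onto $X$ finally handles $\textnormal{lev}_f^{\leq}(\gamma)$.

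**The main obstacle** I expect is the central lemma — proving Mackey-continuity of $g^\ast=f^\ast+\varphi^\ast$ at $0$ — because this is precisely the place where the purely variational hypothesis "$f-x^\ast$ attains its minimum for all $x^\ast\in K$" must be converted into a quantitative, topological statement about $f^\ast$. One must be careful that completeness of $X$ is used at the right moment (it is what makes Theorem~\ref{theorem0}'s equivalence usable in this generality, via the fact that $\tau(X^\ast,X)$-continuity of the conjugate forces the sublevels to be not merely bounded but actually weakly compact), and that the rescaling arguments respect the cone structure $K_\infty$ versus $K\setminus K_\infty$ dictated by conditions (ii)--(iii). A secondary technical point is ensuring the inf-convolution $f\square\varphi$ is exact (attained) often enough that $\textnormal{Epi}(f\square\varphi)=\textnormal{Epi} f+\textnormal{Epi}\varphi$ holds, which is what makes $\textnormal{Ep}_f^\varphi(\gamma)$ coincide with the truncated sublevel of $f\square\varphi$ rather than merely being contained in its closure.
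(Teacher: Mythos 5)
There is a genuine gap, and it sits exactly where you predicted the "main obstacle" would be: your central lemma --- that $g^\ast=f^\ast+\varphi^\ast$ is $\tau(X^\ast,X)$-continuous at $0$ --- is not only unproved but is false in the generality of Theorem \ref{mainresult}. Since $\textnormal{dom}(f^\ast+\varphi^\ast)\subseteq\textnormal{dom}\varphi^\ast\subseteq K$, Mackey-continuity of $g^\ast$ at $0$ would force $K$ to contain a Mackey-neighborhood of $0$; but the hypothesis is only $\mathbb{R}_+K=X^\ast$, i.e.\ $K$ is algebraically absorbing. In the very applications the theorem is designed for (Theorem \ref{theoremconic} and the one-sided James corollary, where $K$ is the intersection of a neighborhood with a strict sublevel set of a support function), $K$ has empty Mackey-interior, so $g^\ast$ cannot be Mackey-continuous at $0$, and Moreau's equivalence (Theorem \ref{theorem0}) then says that the sublevel sets of $g=\overline{\textnormal{co}}(f\square\varphi)$ are \emph{not} all weakly compact --- the inclusion $\textnormal{lev}_{f\square\varphi}^{\leq}(\gamma)\subseteq\textnormal{lev}_{g}^{\leq}(\gamma)$ therefore transfers nothing. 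Even in the favorable case where $K$ is a Mackey-neighborhood of $0$, finiteness of a weak$^\ast$-lsc convex function on a Mackey-neighborhood does not yield Mackey-continuity (the Mackey dual is not barrelled in general): that continuity is precisely the other side of Moreau's equivalence, i.e.\ it is \emph{as strong as} the weak compactness you are trying to prove, so deriving it from the attainment hypothesis by "rescaling along rays" is circular.

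The missing idea is James' theorem, which your outline never invokes. The paper's proof does not convexify at all: it normalizes $\inf_X f=1$, forms $E=\textnormal{Epi}f+\textnormal{Epi}\varphi$, applies the projective map $T(x,\alpha)=\alpha^{-1}(x,-1)$, and shows that every $(x^\ast,r)\in X^\ast\times\mathbb{R}$ attains its supremum on $A=T(E)\cup\{(0,0)\}$ --- this is where conditions (i)--(iv) of Definition \ref{definition3.1}, the hypothesis $(\partial f)^{-1}(x^\ast)\neq\emptyset$, and a Bolzano argument producing $\delta$ with $f^\ast(\delta x^\ast)+\varphi^\ast(\delta x^\ast)=\delta r$ are used. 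Completeness of $X$ then enters through James' theorem applied to $A$, not through Theorem \ref{theorem0}; some such boundary-attainment-to-compactness principle is indispensable, since the whole content of the result is the hard direction of James. A secondary slip: from $\varphi^\ast(0)=0$ you get $\inf_X\varphi=0$, hence $\varphi\geq 0$, but not $\varphi(0)\leq 0$ (indeed $\varphi(0)$ may be $+\infty$), so your inclusion $S_f(\gamma)\subseteq\textnormal{Ep}_f^{\varphi}(\gamma)$ fails; the paper instead translates $f$ by a point $x_0\in\textnormal{dom}\varphi$ and enlarges $\gamma$ by $\varphi(x_0)$.
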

\begin{proof}
	 Fix any $\varphi\in\mathcal{E}(K)$.  By our assumption 
	$(\partial f)^{-1}(0)\neq \emptyset$ we see that $\inf_Xf$ is finite (see \eqref{eq-NemptInvSubd}) and that choosing $x_0\in \partial f^{\ast}(0)\neq \emptyset$ (see \eqref{eq-SubdFctConj})  and setting  
	$\beta_0:=\inf_Xf\in\mathbb{R}$ give $f^{\ast}\geq \langle \cdot,x_0\rangle +\beta_0$ on $X^*$.   Putting $\alpha_0:=1-\inf_Xf$, for the function $g:=f(\cdot+x_0)+\alpha_0$ we have $\inf_Xg=1$ 
	and $g^*=f^*-\langle \cdot,x_0\rangle -\alpha_0\geq \beta_0 -\alpha_0$. 
	Further, it is not difficult to see that 
	$$
	   \textnormal{Ep}_g^{\varphi}(\gamma)=(-x_0,\alpha_0)+\textnormal{Ep}_f^{\varphi}(\gamma-\alpha_0), 
		\; 
		 {S}_{g\square\varphi}(\gamma)=(-x_0,\alpha_0)+{S}_{f\square\varphi}(\gamma-\alpha_0)
	$$
	$$
	   S_g(\gamma)=(-x_0,\alpha_0)+S_f(\gamma-\alpha_0),\; 
		 \textnormal{lev}_g^{\leq }(\gamma)= -x_0+ \textnormal{lev}_f^{\leq }(\gamma-\alpha_0). 
	$$
	Given this, we may and do suppose that $\inf_Xf=1$ and $\inf_{X^{\ast}}f^*>-\infty$.  Then we consider $E:= \textnormal{Epi}f + \textnormal{Epi}\varphi $ and (since $\inf_X\varphi=0$ by Definition \ref{definition3.1}) we can define
	$T : E \to X \times \mathbb{R}$ by $T(x,\alpha)=\alpha^{-1} (x,-1)$. Straightforwardly,  $T$ is a homeomorphism from $E $ to $T(E)$ relative to the topologies induced by the weak topology.    
	
	 Let us prove that $A:= T(E) \cup \{ (0,0) \}$ is relatively weakly compact. Fix any arbitrary 
	$(x^\ast,r ) \in K \times \mathbb{R}$ and consider two cases.

	\textbf{Case 1:}  $x^\ast \in K_\infty $ and $f^*(\eta x^\ast) + \varphi^*(\eta x^\ast)\leq  \eta r$ for all $\eta >0$.
	
	  In this case for all $\eta >0$, and for any $a,b \in X$
	$$
	  \langle x^\ast , a \rangle  - \eta^{-1}f(a) +\langle x^\ast , b  \rangle - \eta^{-1}\varphi(b)
		 \leq r. 
	$$
	Consequently, for every $(x,\nu)=(a+b,\alpha +\beta ) \in E$ 
	with $(a,\alpha)\in \textnormal{Epi}f$ and $(b,\beta)\in\textnormal{Epi}\varphi$ and for every $\eta>0$ 
	\begin{align*}
	\langle (x^\ast, r) , T(x,\nu)\rangle  &=(\alpha +\beta )^{-1}\big(  \langle x^\ast  ,a \rangle + \langle x^\ast  ,b \rangle - r \big)\\ 
	&\leq
	(\alpha +\beta )^{-1}\big(  \eta^{-1}f(a) +  \eta^{-1}\varphi(b)  \big)
	\leq \eta^{-1}.
	\end{align*} 
	Since $\eta >0$ is arbitrary, we get $\langle (x^\ast, r) , T(x,\nu)\rangle   \leq 0$, 
 and additionally 	
$$ 
    \langle (x^\ast, r) , (0,0)\rangle  = 0,
$$ 
so we obtain that $(x^\ast, r)$ attains its maximum  over $A$ at $(0,0)$.
	\newline
	\textbf{Case 2:} Either $(i)$ $x^\ast \in K_\infty$ and $f^*(\eta x^\ast) +\varphi^*(\eta x^\ast) >  \eta r$ for some $\eta >0$, or $(ii)$  $x^* \in K \backslash K_\infty$.
	
In  either $(i)$ or $(ii)$ we consider the proper lower semicontinuous convex function $h$ on $\R$ defined by  
$h(t)= f^*(t x^\ast) + \varphi^{\ast}(t x^\ast) - rt$; this is a continuous function over its effective domain 
{(see, for example, \cite[Theorem 10.2]{MR1451876}).} Moreover, in both cases 
$h(0) =f^*(0) + \varphi^\ast (0) = -1  < 0$ and 
$\sup\limits_{ \substack{t \in \textnormal{dom } h\\ t>0}} h(t) >0$. Indeed, in (i) the latter inequality is obvious, and in (ii)  we have that 
\begin{align*}
   {\sup\limits_{\substack{ t \in \textnormal{dom } h\\ t>0}} h(t) } \geq 
\inf_{X^\ast} f^\ast  +  
\sup\limits_{ \substack{ t x^\ast \in \textnormal{dom } \varphi ^\ast \\ t>0 }} 
\varphi ^\ast(t x^\ast) -t_0\max\{0,r\}=+\infty, 
\end{align*}
 where $t_0 :=\sup\{ t >0 :   t x^\ast \in  \textnormal{dom } \varphi ^\ast   \} <+\infty$ because 
$x^* \not\in K_\infty$ and $0\in K$.  Then,  by \emph{Bolzano's Theorem}  there exists  
{a real $\delta >0$ such that }  
	$$  
	    f^*(\delta x^\ast) + \varphi^\ast (\delta x^\ast) =\delta r.
	$$ 
	Because   $\delta x^{\ast}\in \textnormal{dom }\ \varphi^\ast$ and 
	$\delta x^{\ast} \in K$ (since $\textnormal{dom}\,\varphi^{\ast}\subset K$), 
	 there exist by \eqref{eq-DefMRSubd} and the hypothesis relative to  $\varphi$ and $f$  some 
	$a_0,b_0 \in X $ such that  
	$\varphi^\ast(\delta x^*) = \langle \delta x^\ast , b_0 \rangle  - \varphi(b_0)$ and 
	$f^\ast(\delta x^*) = \langle \delta x^\ast , a_0 \rangle  - f(a_0)$,   
	 and this implies that
$$ \langle x^\ast , a_0 \rangle  +\langle x^\ast , b_0 \rangle   - \delta^{-1}(f(a_0) + \varphi(b_0)) = r,$$
and for all $a,b \in X$
		$$\langle x^\ast , a \rangle  +\langle x^\ast , b \rangle  - \delta^{-1} (f(a) + \varphi(b)) \leq r.$$

	  Then, for every $(x,\nu)=(a+b,\alpha +\beta ) \in E$ with some $(a,\alpha)\in\textnormal{Epi}f$ and 
		$(b,\beta)\in \textnormal{Epi}\varphi$,  we have 
	\begin{align*}
	\langle (x^\ast, r) , T(x,\nu)\rangle  &=(\alpha +\beta )^{-1}\big(  \langle x^\ast  ,a \rangle + \langle x^\ast  ,b \rangle - r \big)\\ 
	&\leq
	(\alpha +\beta )^{-1}\big(  \delta^{-1}f(a) +  \delta^{-1}\varphi(b)  \big)
	\leq \delta^{-1},
	\end{align*} 
  $\langle (x^\ast, r) , T(a_0+ b_0,f(a_0) + \varphi(b_0))\rangle  = \delta^{-1}$ and 	$\langle (x^\ast, r) , (0,0)\rangle=0$,
 and all together entail  that
 $(x^\ast, r)$ attains its maximum over $A$ at $T(a_0+ b_0,f(a_0) + \varphi(b_0))$. 
	
	Since {$\mathbb{R}_+( K \times \mathbb{R})=X^*\times \mathbb{R}$,} we conclude that     every $(x^\ast, r)\in X^*\times \mathbb{R}$ attains its maximum over $A$. Then, since the locally convex space $X$ is complete,  by James' Theorem (see, e.g., \cite[Theorem 6]{MR0165344}) $A$ is relatively weakly compact. 
	
	   Fix any real $\gamma \geq 1$ and consider any net $(x_i,\nu_i)_{i\in {I}}$  in  
$\textnormal{Ep}_f^{\varphi}(\gamma)$. Since $1\leq \nu_i\leq \gamma$, we may suppose that 
$(\nu_i)_i$ converges to some $\nu\in [1,\gamma]$. Since   
$z_i:=\nu_i^{-1}(x_i,-1)$ is in $T(E)$, there is a subnet $(z_{s(j)})_{j\in {J}}$ converging weakly 
to some $(u,\lambda)$ in $X\times \mathbb{R}$, so $(x_{s(j)})_{j\in {J}}$ converges weakly to 
$\nu{u}$ in $X$. Then $(x_{s(j)},\nu_{s(j)})_{j\in {J}}$ converges weakly, which justifies that 
$\textnormal{Ep}_f^{\varphi}(\gamma)$ is relatively weakly compact. 

            Now noting that $(x,\nu)\in {S}_{f\square\varphi}(\gamma)$ entails that 
$(x,\nu+1)\in \textnormal{Ep}_f^{\varphi}(\gamma +1)$, we see that 
${S}_{f\square\varphi}(\gamma) \subset (0,-1)+ \textnormal{Ep}_f^{\varphi}(\gamma +1)$, hence 
${S}_{f\square\varphi}(\gamma)$ is relatively weakly compact. 

     In order to prove the desired property of $	S_f(\gamma)	$, consider $x_0\in \textnormal{dom} \varphi$ and take the function $\tilde{f}(\cdot) := f(\cdot + x_0)$. Since  $\tilde{f}\square \varphi (x)\leq f(x) +\varphi(x_0)$, we get that 
 $S_f(\gamma)	\subseteq 	{S}_{\tilde{f}\square\varphi}(\gamma+\varphi(x_0))$,  which proves that  $S_f(\gamma)$ is relatively weakly compact.

  Finally, 	the weak compactness of $\textnormal{lev}_f^{\leq }(\gamma)$ follows from the inclusion 
	$\textnormal{lev}_f^{\leq }(\gamma) \times\{\gamma\}\subset S_f(\gamma)$.

\end{proof}  

   We state now in a  lemma a  chain rule for the composition of convex functions. The result is a particular case of a more general version \cite[Corollary 1]{MR1721727}.

\begin{lemma}\label{lem-Comp}
	Let $(Y,\tau)$ be an arbitrary locally convex space and $g: Y \to \mathbb{R}$ be a lower semicontinuous convex  function. Consider  $\varPsi : \mathbb{R} \to \mathbb{R}\cup\{ +\infty\}$ a non-decreasing continuous differentiable function in its domain. Then for every $x \in \textnormal{dom} g$ with $\varPsi'(g(x)) >0$ we have that 
	\begin{align}\label{compositionrule}
		\partial ( \varPsi  \circ g) (x)= \varPsi'( g(x)) \partial g(x).
	\end{align} 
\end{lemma}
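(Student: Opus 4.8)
The plan is to establish the two inclusions in \eqref{compositionrule} directly from the definition of the subdifferential, after a one‑dimensional reduction along segments issuing from $x$. Throughout, write $c:=g(x)$ and $\lambda:=\varPsi'(c)>0$; note that $c\in\textnormal{dom}\varPsi$ (and, being a point of two‑sided differentiability of $\varPsi$, is interior to $\textnormal{dom}\varPsi$), and that for every $y\in Y$ the map $t\mapsto g(x+t(y-x))$ is a finite convex function of one real variable, hence continuous, so that $g(x+t(y-x))\to c$ as $t\to0^+$.

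For the inclusion $\varPsi'(c)\,\partial g(x)\subseteq\partial(\varPsi\circ g)(x)$, I would take $x^\ast\in\partial g(x)$, i.e. $g(y)\geq c+\langle x^\ast,y-x\rangle$ for all $y$, apply the non‑decreasing map $\varPsi$, and then invoke the supporting‑line inequality $\varPsi(s)\geq\varPsi(c)+\lambda(s-c)$ — valid since $\varPsi$ is convex and differentiable at $c$, so $\varPsi'(c)\in\partial\varPsi(c)$ — with $s=c+\langle x^\ast,y-x\rangle$. This gives $\varPsi(g(y))\geq\varPsi(c)+\lambda\langle x^\ast,y-x\rangle$ for all $y$, that is, $\lambda x^\ast\in\partial(\varPsi\circ g)(x)$. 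This direction is the one where the hypotheses genuinely enter: convexity of $\varPsi$ supplies the global supporting line at $c$, and $\lambda>0$ is what allows one to divide in the converse inclusion.

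For the reverse inclusion $\partial(\varPsi\circ g)(x)\subseteq\varPsi'(c)\,\partial g(x)$, take $p\in\partial(\varPsi\circ g)(x)$; it suffices to show $\langle p,y-x\rangle\leq\lambda(g(y)-c)$ for every $y$, which is exactly $p/\lambda\in\partial g(x)$. I would fix $y$ and test the subgradient inequality only along the segment $y_t:=x+t(y-x)$, $t\in(0,1]$, getting $\varPsi(g(y_t))\geq\varPsi(c)+t\langle p,y-x\rangle$. Convexity of $g$ gives $g(y_t)\leq c+t(g(y)-c)$, whence monotonicity of $\varPsi$ yields $\varPsi(g(y_t))\leq\varPsi\big(c+t(g(y)-c)\big)$ (all quantities finite for small $t$, since $c$ is interior to $\textnormal{dom}\varPsi$). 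Combining these, dividing by $t$, and letting $t\to0^+$ — the resulting right‑hand quotient tends to $\varPsi'(c)(g(y)-c)=\lambda(g(y)-c)$ by differentiability of $\varPsi$ at $c$ — produces the claimed inequality.

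The genuinely delicate points here are bookkeeping rather than structural: keeping the segment inside the region where $\varPsi$ is finite and differentiable, and carrying out the passage to the limit in one real variable. An essentially equivalent alternative route would be to combine \eqref{eq-SubdDirDer}, applied both to $g$ and to the convex function $\varPsi\circ g$, with the chain rule for directional derivatives $(\varPsi\circ g)'(x;h)=\lambda\,g'(x;h)$ — valid for every $h$, with the convention $\lambda\cdot(-\infty)=-\infty$ to cover directions with $g'(x;h)=-\infty$ — together with the trivial identity $\partial(\lambda\,\phi)(0)=\lambda\,\partial\phi(0)$ for $\lambda>0$; there the only subtle case is again the direction $h$ with an infinite directional derivative of $g$.
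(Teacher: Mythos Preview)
Your argument is correct and self-contained, but it follows a genuinely different route from the paper's. The paper does not argue directly: it simply verifies a constraint qualification and then invokes a general chain rule from Combari--Laghdir--Thibault \cite[Corollary 1]{MR1721727} as a black box. Your proof, by contrast, reduces everything to one real variable along segments and handles the two inclusions by hand, using only the definition of the subdifferential, monotonicity and differentiability of $\varPsi$, convexity of $g$, and (for the forward inclusion) the global supporting-line inequality for $\varPsi$. This is more elementary and does not require knowing the cited reference; the price is a slightly longer write-up. One remark: you explicitly use that $\varPsi$ is convex (for the supporting-line step), an assumption that is not written into the lemma's statement but is implicit in the paper's framing (``chain rule for the composition of convex functions'') and in the cited source, and is in fact necessary for the forward inclusion to hold globally---so your reading is the right one, but it would be worth flagging. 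Your alternative route through \eqref{eq-SubdDirDer} and the directional-derivative chain rule is also valid and amounts to the same one-dimensional computation.
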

\begin{proof}
In order to apply  \cite[Corollary 1]{MR1721727} we must verify  \cite[$(C.Q_4)$, p. 29]{MR1721727}. Consider  $x\in Y$ with $\varPsi'(g(x)) >0$.  Pick an arbitrary neighborhood $V $ of zero in $Y$.   
Taking a real $\lambda \geq 0$ such that $g(x)\in \textnormal{int}(\textnormal{lev}_\Psi^{\leq }(\lambda))$ 
 and $x \in \lambda V$,  we see that 
$$0 \in \textnormal{int}\left(			\textnormal{lev}_\Psi^{\leq }(\lambda) - g( 	\lambda V)		)		\right).$$
Then applying  \cite[Corollary 1]{MR1721727} we have that \eqref{compositionrule} holds.

\end{proof} 

 The next  lemmas provide various conditions for the non-emptiness of $\mathcal{E}(K)$. The first lemma considers the case of a strict sublevel of a finite-valued lower $w^*$-semicontinuous convex funcion on $X^*$.

\begin{lemma}\label{Lemmanuevo}
	Let $g: X^\ast \to \mathbb{R}$ be a weak$^\ast$ lower semicontinuous convex  function such that $0\in \textnormal{lev}_g^{< }(\beta):=\{ x^\ast\in X^{\ast} : g(x^\ast) < \beta		\}$  and
	\begin{equation*}
	 \partial g(x^\ast) \neq \emptyset, \; \forall x^\ast \in  \textnormal{lev}_g^{< }(\beta).
	\end{equation*} 
	Then $\mathcal{E}(\textnormal{lev}_g^{< }(\beta)) \neq \emptyset$; in fact, there exists 
	a proper lower semicontinuous convex function 
	$\varphi\in \mathcal{E}(\textnormal{lev}_g^{< }(\beta))$ such that $\varphi^*$ is bounded from below on $X^*$. 
\end{lemma}

\begin{proof}
	Consider a  non-decreasing convex function $\varPsi:\mathbb{R}\to ]0,+\infty]$ which on $]-\infty,\beta[$ is finite,   injective  and continuously differentiable   with  $\varPsi'(x) > 0$  for all $x\in ]-\infty, \beta[$, and which satisfies  $\lim\limits_{ u \to \beta^{-} } \varPsi(u) =+\infty$; for example one can consider
	 $\varPsi$ defined by $\varPsi(u)=+\infty$ for all $u\in [\beta,+\infty[$ and 
	$$
	   \varPsi(u)= \frac{1}{\beta -u} \quad \text{for all}\; u\in ]-\infty,\beta[ .   
	$$

	     Now defining $\psi (x^\ast) = \varPsi ( g(x^\ast))-\varPsi(g(0))$ for all $x^{\ast}\in X^{\ast}$,  
			it is not difficult  to see 
	that $\psi$ is a proper weak$^\ast$ lower semicontinuous convex function and $\textnormal{dom}\, \psi = \textnormal{lev}_g^{< }(\beta)$.  Then there exists a proper lower semicontinuous convex function $\varphi:X\to\mathbb{R}\cup\{+\infty\}$ such that $\psi=\varphi^{\ast}$. Further, 
	$\textnormal{dom}\,\varphi^{\ast}=\textnormal{dom}\,\psi= \textnormal{lev}_g^{< }(\beta)$ and $\varphi^{\ast}(0)=0$ and $\varphi^{\ast}(X^{\ast})\subset\mathbb{R}\cup\{+\infty\}$.  
	We claim that $\varphi^{\ast}$ satisfies:
		\begin{enumerate}
			\item[(iii)]\label{Lemmanuevoc} $\sup\limits_{\substack{  \eta x^\ast	 
					\in\textnormal{dom} \varphi^{\ast},\; \eta > 0 }  } \varphi^{\ast}( \eta x^\ast) = +\infty$ for all $x^\ast \in   \textnormal{lev}_g^{< }(\beta) \backslash \left( \textnormal{lev}_g^{< }(\beta) \right)_{\infty}$.
		\item[(iv)] \label{Lemmanuevoa} $(\partial \varphi )^{-1}(x^\ast)\neq \emptyset $ for all 
	$x^\ast \in \textnormal{dom} \varphi^{\ast}$. 
		
	\end{enumerate}
Indeed, pick $x^\ast \in \textnormal{lev}_g^{< }(\beta) \backslash \left( \textnormal{lev}_g^{< }(\beta) \right)_{\infty}$. Then $0\in \textnormal{lev}_g^{< }(\beta)$ and 
$x^{\ast}\not\in \left( \textnormal{lev}_g^{< }(\beta) \right)_{\infty}$, so by the convexity  of 
$ \textnormal{lev}_g^{< }(\beta) $ we have that, for some $\lambda >0$, 
$g(\lambda x^\ast) \geq \beta$. Therefore, by the continuity on $\R$  of the function 
$s\mapsto  g(s x^\ast)$, there is some   $\eta_0 >0$ such that $g(\eta_0x^{\ast})=\beta$. Putting 
$\eta_1:=\inf\{\eta>0:g(\eta{x^\ast})=\beta\}$, we see by the continuity of $g$ that $\eta_1$ is a real number with $0<\eta_1\leq \eta_0$  such that $g({\eta_1}x^\ast)=\beta$ and 
$\lim_{\eta\to \eta^-_1}g({\eta}x^\ast)=\beta$. Then $\eta{x^\ast}\in \textnormal{dom}\psi$ for every 
$\eta\in ]0,\eta_1[$ and $\lim_{\eta\to \eta^-_1}\psi({\eta}x^\ast)=+\infty$
This ensures that  (iii) holds.  

   By the chain-rule in Lemma \ref{lem-Comp} we also have that 
 \begin{align*}
 \partial {\varphi}^{\ast} (x^\ast)  = \varPsi'(g(x^\ast)) \partial g(x^\ast),  
 \text{ for all } x^\ast  \text{ with } \varPsi'(g(x^\ast)) >0,
\end{align*} 
then (iv) holds by \eqref{eq-SubdInvConj}, so   $\mathcal{E}(\textnormal{lev}_f^{< }(\beta)) \neq \emptyset$.  
Finally, since $\varPsi \geq 0$ we have that $\varphi^*=\psi$ is bounded from below. 
\end{proof}

  The second lemma is related to the polar in $X^*$ of a circled weakly compact set. 
			
\begin{lemma}\label{Key:lemma}
	Let $K$ be a nonempty  circled convex subset in $X$ with $\mathbb{R}_{+} K^\circ=X^\ast$. Then 	$\mathcal{E}(K^\circ)\neq \emptyset$  if and only if $K$ is relatively weakly compact; in such a case there exists a proper weakly inf-compact convex function $\varphi\in \mathcal{E}(K^\circ)$ 
	with $\varphi ^* \geq 0$. 
\end{lemma}
\begin{proof}
	First suppose that $\mathcal{E}(K^\circ)\neq \emptyset$. Then, the function $f:=\sigma_{K^\circ}$ satisfies the properties:  $0\in (\partial f)^{-1}(x^\ast)$ for every $x^\ast \in K^\circ$, and   $\mathbb{R}_{+} K^\circ=X^\ast$ (by our assumptions). Then, by Theorem \ref{mainresult} we have that $\textnormal{lev}_f^{\leq }(1)$ is  relatively weakly compact, and since $K\subseteq \textnormal{lev}_f^{\leq }(1)$ we get the desired property for $K$. 
	
	Now suppose that $K$ is relatively weakly compact.  Consider 
	$\zeta:\R\to\R\cup\{+\infty\}$ with $\zeta(t)=\tan(t)$ for $t\in [0,\pi/2[$ and $\zeta(t)=+\infty$ 
	otherwise, and define $h:X^{\ast}\to \R\cup\{+\infty\}$ by 
	$$
	   h(x^*):= \zeta \big(\sigma_{K}(x^\ast) \frac{\pi}{2}\big) + \delta_{K^\circ_s}(x^\ast) 
		\quad \text{for all}\; x^{\ast}\in X^{\ast}.
	$$
	We notice that  $h$ is convex 
	and $\tau(X^\ast,X)$-continuous on $K^{\circ}_s$
	(and so weak$^*$ lower semicontinuous on $X^{\ast}$). Then, we define 
	$\varphi:X\to \R\cup\{+\infty\}$ by 
	$$
	 \varphi (x)=h^\ast (x)=\sup_{x^\ast \in X^\ast} \big( \langle x^\ast , x \rangle - h(x^\ast)	\big) 
	 \quad\text{for all}\; x \in X, 
	$$   
	and it is well-known that its conjugate function satisfies   $\varphi^{*}=h$ 
	(see, e.g., \cite[Theorem I.4]{CastaingValadier}, or \cite[Theorem 6.3.7]{Laurent}). Moreover, $\varphi^{\ast}(0)=0$ and: 
	\begin{enumerate}[label={(\alph*)},ref={(\alph*)}]
	
		\item We have  
		\begin{align*}
		\left( K^\circ\right)_{\infty} & =\{ x^\ast \in  X^\ast : \sigma_{K}(x^\ast) \leq 0 \} 
		    = \{x^{\ast}\in X^{\ast}: \sigma_K(x^{\ast})=0\} \\
		 & \subseteq  \textnormal{dom}\,\varphi^\ast = \{ x^* \in X^* : \sigma_{K}(x^*) <1\}=K_s^\circ; 
		\end{align*}
		\item  For any $x^{\ast}\in K^\circ \backslash \left( K^{\circ}\right)_{\infty} $, or equivalently $0 <\sigma_K(x^\ast)\leq 1$,  since $\tan(t)\to +\infty$ as $t\to (\pi/2)^-$ we see that $$\sup\limits_{\substack{ \eta x^\ast \in \textnormal{dom }\varphi^\ast\\ \eta >0 }} \varphi^\ast(\eta x^\ast)=+\infty;$$ 
		\item Since $\varphi$ is epi-pointed, Moreau's Theorem (see Theorem \ref{theorem0}) 
tells us that $\varphi- x^\ast$ is weakly inf-compact for every $x^\ast \in \textnormal{Int}_{\tau(X^{\ast},X)}(\textnormal{dom } \varphi^\ast) 
	=K^{\circ}_s=\textnormal{dom } \varphi^\ast$, therefore $(\partial \varphi)^{-1} (x^*) \neq \emptyset$  for every $x^\ast \in \textnormal{dom } \varphi^\ast$.
	\end{enumerate} 
	We derive that $\varphi \in \mathcal{E}(K^\circ)$. By the arguments in (c) the function $\varphi$ is also weakly inf-compact. Further, since by its definition $h\geq 0$, we have $\varphi^*\geq 0$,  so the proof is complete.
\end{proof} 

         The third lemma studies the situation of another set constructed from a finite-valued lower $w^*$-semicontinuous convex function on $X^*$.  
		
\begin{lemma}\label{Lemmanuevo2}
Consider $\alpha ,\beta \in [-\infty,\infty]$.	Let $g: X^\ast \to \mathbb{R}$ be a weak$^\ast$ lower semicontinuous convex  function such that   
	\begin{align*}
	0\in \textnormal{lev}_g^{< }(\alpha,\beta):=&\{ x^\ast : \alpha <g(x^\ast) < \beta		\}, \text{ and}\\
		\partial g(x^\ast) \neq \emptyset, \; &\forall x^\ast \in  \textnormal{lev}_g^{< }(\alpha,\beta).
	\end{align*} 
	Then, there exists a  weak$^\ast$-neighborhood of zero $U \subseteq X^*$ such that  $$\mathcal{E}(U \cap \textnormal{lev}_g^{< }(\alpha,\beta)) \neq \emptyset;$$ 
	in fact there exists $\varphi \in \mathcal{E}(U \cap \textnormal{lev}_g^{< }(\alpha,\beta)) $  such that $\varphi^*$ is bounded from below on $X^*$
\end{lemma}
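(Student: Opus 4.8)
The plan is to deduce this from Lemma~\ref{Lemmanuevo}. One cannot apply that lemma directly, because $\textnormal{lev}_g^{<}(\alpha,\beta)$ is not a sublevel set of a convex function and the hypothesis on $\partial g$ only holds on that ``annular'' region; the idea is to cut $g$ down to a \emph{convex} weak$^\ast$-neighborhood of $0$ contained in $\{x^\ast:g(x^\ast)>\alpha\}$, on which the lower constraint $g>\alpha$ becomes automatic, and then to ``thicken'' $g$ so that its strict $\beta$-sublevel set becomes exactly the set we are after.

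Concretely, since $g$ is weak$^\ast$-lower semicontinuous the set $\{x^\ast:g(x^\ast)>\alpha\}$ is weak$^\ast$-open, and it contains $0$ because $g(0)>\alpha$. As $(X^\ast,w^\ast)$ is locally convex I would fix a weak$^\ast$-continuous seminorm $p$ on $X^\ast$ with
\[
U:=\{x^\ast\in X^\ast:p(x^\ast)<1\}\subseteq\{x^\ast:g(x^\ast)>\alpha\},
\]
and take this $U$ as the asserted neighborhood of $0$. Then I would put
\[
\tilde g:=\max\bigl\{\,g\,,\ p+\beta-1\,\bigr\},
\]
which is a real-valued, convex, weak$^\ast$-lower semicontinuous function. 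Since $p(0)=0<1$ and $g(0)<\beta$ one has $\tilde g(0)<\beta$, and
\[
\{x^\ast:\tilde g(x^\ast)<\beta\}=\{g<\beta\}\cap\{p<1\}=U\cap\{g<\beta\}=U\cap\textnormal{lev}_g^{<}(\alpha,\beta),
\]
the last equality because $U\subseteq\{g>\alpha\}$.

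It then remains to check that $\partial\tilde g(x^\ast)\neq\emptyset$ for every $x^\ast$ with $\tilde g(x^\ast)<\beta$. Such an $x^\ast$ lies in $U\cap\textnormal{lev}_g^{<}(\alpha,\beta)\subseteq\textnormal{lev}_g^{<}(\alpha,\beta)$, so $\partial g(x^\ast)\neq\emptyset$ by hypothesis; moreover $\partial p(x^\ast)\neq\emptyset$ and consists of elements of $X$, since a weak$^\ast$-continuous convex function is subdifferentiable at every point with weak$^\ast$-continuous subgradients. Now at least one of $g(x^\ast)\ge p(x^\ast)+\beta-1$ and $g(x^\ast)\le p(x^\ast)+\beta-1$ holds: if the first holds then every element of $\partial g(x^\ast)$ belongs to $\partial\tilde g(x^\ast)$, and if the second holds then every element of $\partial p(x^\ast)$ does --- both facts being immediate from the subgradient inequality, so no constraint qualification is needed --- hence $\partial\tilde g(x^\ast)\neq\emptyset$ with $X$-valued subgradients. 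Then Lemma~\ref{Lemmanuevo} applied to $\tilde g$ yields $\mathcal{E}\bigl(\{\tilde g<\beta\}\bigr)\neq\emptyset$, i.e. $\mathcal{E}\bigl(U\cap\textnormal{lev}_g^{<}(\alpha,\beta)\bigr)\neq\emptyset$, which is the desired conclusion. The steps requiring care are the topological bookkeeping --- that $p$ can be chosen weak$^\ast$-continuous, so that $\tilde g$ is weak$^\ast$-lower semicontinuous, and that all subgradients in play are genuine elements of $X$ --- together with the verification of the displayed identity for $\{\tilde g<\beta\}$, where the inclusion $U\subseteq\{g>\alpha\}$ is essential; everything else is routine.
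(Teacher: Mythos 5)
Your proof is correct, but it takes a genuinely different route from the paper's. The paper also reduces to Lemma~\ref{Lemmanuevo}, but does so in two pieces: it chooses a convex balanced weak$^\ast$-neighborhood $U\subseteq\{g>\alpha\}$, gets $\varphi_1\in\mathcal{E}(U)$ from Lemma~\ref{Key:lemma} (viewing $U$ as the strict polar of a weakly compact circled convex set), gets $\varphi_2\in\mathcal{E}(\textnormal{lev}_{\tilde g}^{<}(\beta))$ from Lemma~\ref{Lemmanuevo} applied to $\tilde g=\max\{g,\alpha\}$, and then takes the inf-convolution $\varphi=\varphi_1\,\square\,\varphi_2$, using $\varphi^{\ast}=\varphi_1^{\ast}+\varphi_2^{\ast}$ and $\textnormal{dom}\,\varphi^{\ast}=\textnormal{dom}\,\varphi_1^{\ast}\cap\textnormal{dom}\,\varphi_2^{\ast}$ to re-verify all four conditions of Definition~\ref{definition3.1} for the intersection. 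You instead fold the neighborhood constraint into the function itself, replacing $g$ by $\max\{g,\,p+\beta-1\}$ so that its strict $\beta$-sublevel set is exactly $U\cap\textnormal{lev}_g^{<}(\alpha,\beta)$, and then invoke Lemma~\ref{Lemmanuevo} a single time. Your verification is sound: the identity $\{\tilde g<\beta\}=\{p<1\}\cap\{g<\beta\}=U\cap\textnormal{lev}_g^{<}(\alpha,\beta)$ uses $U\subseteq\{g>\alpha\}$ correctly, the subdifferentiability of the max at each point follows from the dichotomy argument you give (no sum or max rule with constraint qualification is needed), and the subgradients of $p$ do land in $X$ since a basic weak$^\ast$-continuous seminorm is the support function of the circled convex hull of a finite subset of $X$. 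What your version buys is economy: you avoid the inf-convolution, the normalization \eqref{infimum}, the asymptotic-cone bookkeeping, and any appeal to Lemma~\ref{Key:lemma} here. What the paper's version buys is a reusable structural fact, namely that the classes $\mathcal{E}(\cdot)$ behave well under intersections via inf-convolution, which is in the spirit of how the rest of the paper assembles its test functions.
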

\begin{proof}
	Let us consider $U$ as a weak$^\ast$-closed convex balanced weak$^\ast$-neighborhood of zero such that $g(x^\ast) >\alpha$ for every $x^\ast \in U$. By Lemma \ref{Key:lemma} there exists a proper weakly inf-compact convex function $\varphi_1 \in \mathcal{E}(U)$ such that $\varphi_1^\ast$ is bounded from below. Now, define the function $\tilde{g}$ on $X^*$ by $\tilde{g}(x^\ast):=\max\{ g(x^\ast),\alpha \}$ for all $x^*\in X^*$.  Then, it is straightforward that $\tilde{g}$ is a  weak$^\ast$-lower semicontinuous convex  function, $0\in \textnormal{lev}_{\tilde{g}}^{< }(\beta)$ and for all $u^\ast \in  \textnormal{lev}_{\tilde{g}}^{< }(\beta)$
	\begin{align*}
	\partial \tilde{g} (u^\ast) \supseteq \left\{ \begin{array}{cl}
  	\partial g (u^\ast),  & \text{ if } g(u^\ast) >\alpha ,\\
  	0, & \text{ if } g(u^\ast) \leq \alpha. 
	\end{array}\right.
	\end{align*} 
Consequently by Lemma \ref{Lemmanuevo} there exists a proper lower semicontinuous convex function  
$\varphi_2 \in 	\mathcal{E}(\textnormal{lev}_{\tilde{g}}^{< }(\beta))$ and $\varphi_2$ is bounded from below on $X$ since 
	$(\partial\varphi_2)^{-1}(0)\neq\emptyset$  by $(i)$ and $(iv)$ in 
	Definition \ref{definition3.1}. {Moreover, $\varphi^*_2$  is bounded from below.}
 The convex function $\varphi :=\varphi_1 \square  \varphi_2$ is then proper lower semicontinuous 
(see \cite[\S4.e, p. 23]{Moreau}). Let us check that $\varphi \in \mathcal{E}(U \cap \textnormal{lev}_g^{< }(\alpha,\beta))$. Indeed
\begin{enumerate}[label={(\roman*)},ref={(\roman*)}]
		\item   $\varphi^\ast= \varphi_1^\ast + \varphi_2^\ast$ (see, e.g., \cite{Laurent,Moreau}), consequently $\varphi^\ast$ is proper and $\varphi^\ast(0)=0$.  
		\item Since $	\textnormal{dom}\,\varphi^\ast =  \textnormal{dom}\,\varphi_1 ^\ast\cap 
		\textnormal{dom}\,\varphi_2 ^\ast$ and $  U \cap \textnormal{lev}_g^{< }(\alpha,\beta)=   U \cap \textnormal{lev}_{\tilde{g}}^{< }(\beta)$ we have that 
		\begin{align*}
	  ( U \cap \textnormal{lev}_g^{< }(\alpha,\beta))_\infty &= 
	  (U \cap \textnormal{lev}_{\tilde{g}}^{< }(\beta))_\infty 
	  \subseteq U_\infty \cap (\textnormal{lev}_{\tilde{g}}^{< }(\beta))_\infty \\
	 & \subseteq \textnormal{dom}\,\varphi_1^\ast \cap \textnormal{dom}\,\varphi_2^\ast 
	=  \textnormal{dom}\,\varphi ^\ast
	=\textnormal{dom}\,\varphi_1 ^\ast\cap \textnormal{dom}\,\varphi_2^\ast   \\
	&  \subseteq  U \cap  \textnormal{lev}_{\tilde{g}}^{< }(\beta)= U \cap \textnormal{lev}_g^{< }(\alpha,\beta).
		\end{align*}     
		\item  {If $x^\ast \in \left( U \cap \textnormal{lev}_g^{< }(\alpha,\beta)\right)  \backslash ( U \cap \textnormal{lev}_g^{< }(\alpha,\beta))_\infty$ we have 
		\begin{align}\label{equation01}
		x^\ast \in \big(U \backslash U_\infty\big) \bigcup \big(\,\textnormal{lev}_g^{< }(\alpha,\beta) \backslash( \textnormal{lev}_g^{< }(\alpha,\beta))_\infty \,\big).
		\end{align}
	Considering $\lambda:=\min\{\lambda_1,\lambda_2 \}$, where $\lambda_1:=\sup\{ \nu >0 : \nu x^\ast \in U \}$ and $\lambda_2:=\sup\{ \nu >0 :\nu x^\ast \in \textnormal{lev}_g^{< }(\alpha,\beta)   \}$, it follows from  \eqref{equation01} that $\lambda \in \mathbb{R}$. Thus,
	 \begin{align}
	 \sup\limits_{ \substack{ \eta >0,\\ \; \eta x^\ast \in \textnormal{dom } \varphi^\ast}} \{ \varphi^\ast (\eta x^\ast) \} &\geq  \lim\limits_{  \eta \to \lambda^+} \{ \varphi_1^\ast (\eta x^\ast)+\varphi_2^\ast (\eta x^\ast) \},
	 	\end{align}
	 	so recalling that $0\in \textnormal{dom } \varphi_1^\ast \cap \textnormal{dom } \varphi_2^{\ast}$  
		and $\varphi_1^\ast$ and $\varphi_2^{\ast}$ are bounded from below, we obtain 
	 	$$
	 	 \sup\limits_{ \substack{ \eta >0,\\ \; \eta x^\ast \in \textnormal{dom } \varphi^\ast}}
		 \varphi^\ast (\eta x^\ast) =+\infty.
	 	$$}
		\item Take  $x^\ast \in \textnormal{dom} \varphi^\ast $. Since $\left(  \partial \varphi_1 \right)^{-1}(x^\ast)\neq \emptyset$ and $\left(  \partial \varphi_2 \right)^{-1}(x^\ast)\neq \emptyset$ 
by ($iv$) in Definition \ref{definition3.1} 	
		(because $\textnormal{dom} \varphi^\ast =\textnormal{dom} \varphi_1^\ast\cap \textnormal{dom} \varphi_2^\ast$), we have that   
	$$ 
	\partial \varphi^{\ast}(x^{\ast})\supset 
	 \partial\varphi_1^{\ast}(x^{\ast})+\partial\varphi_2^{\ast}(x^{\ast})\neq\emptyset,
	$$ 
	so 	$\left(  \partial \varphi\right)^{-1}(x^\ast)\neq \emptyset$ by \eqref{eq-SubdInvConj}.
\end{enumerate}
\end{proof}

We recall that the \emph{algebraic interior} $A^i$ of a set $A$ in $X$ is defined by
$$A^{i}:=\{ 		a \in A : \forall x\in X, \; \exists \delta >0,\; \forall \lambda \in [0,\delta], \; a + \lambda x \in A		\}.$$
\begin{theorem}\label{theoremconic}
	Let $\alpha,\beta\in [-\infty,+\infty]$ and  $f: X\to \mathbb{R}$ be a  function such that 
	$\textnormal{lev}_{f^\ast}^{< }(\alpha,\beta) $ has nonempty algebraic interior and  $$ \left( \partial f \right)^{-1}(x^\ast)\neq \emptyset, \; \forall  x^\ast \in \textnormal{lev}_{f^\ast}^{< }(\alpha,\beta).$$
	Then for every $x^\ast \in (\textnormal{lev}_{f^{\ast}}^{< }(\alpha,\beta))^{i}$
	and $\gamma \in \mathbb{R}$ the sets 
	\begin{align*}
	S_{f-x^*}(\gamma)&=\{  (x,\nu) \in X\times \mathbb{R} :	f(x) - \langle x^\ast , x \rangle \leq \nu \leq \gamma		\},\\
	\textnormal{lev}_{f-x^\ast}^{\leq}( \gamma)&=\{ 		x\in X : f(x) -\langle x^\ast , x \rangle \leq  \gamma		\}
	\end{align*}
	are relatively weakly compact.
\end{theorem}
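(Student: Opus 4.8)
The plan is to derive the statement from Theorem \ref{mainresult} after a translation bringing the prescribed functional to the origin, the required cone generator being supplied by Lemma \ref{Lemmanuevo2}. First, I would fix $x_0^\ast\in(\textnormal{lev}_{f^\ast}^{<}(\alpha,\beta))^{i}$ together with $\gamma\in\mathbb{R}$, and set $\tilde f:=f-\langle x_0^\ast,\cdot\rangle$. A direct computation gives $\tilde f^\ast=f^\ast(\cdot+x_0^\ast)$, hence $D:=\textnormal{lev}_{\tilde f^\ast}^{<}(\alpha,\beta)=\textnormal{lev}_{f^\ast}^{<}(\alpha,\beta)-x_0^\ast$, so that $0\in D^{i}$; likewise $y^\ast\in\partial\tilde f(x)$ if and only if $y^\ast+x_0^\ast\in\partial f(x)$, whence $(\partial\tilde f)^{-1}(y^\ast)=(\partial f)^{-1}(y^\ast+x_0^\ast)\neq\emptyset$ for every $y^\ast\in D$. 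Since moreover $S_{f-x_0^\ast}(\gamma)=S_{\tilde f}(\gamma)$ and $\textnormal{lev}_{f-x_0^\ast}^{\leq}(\gamma)=\textnormal{lev}_{\tilde f}^{\leq}(\gamma)$, it is enough to prove that these two sets are relatively weakly compact.

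Next, I would pass to $g:=\tilde f^\ast$, a weak$^\ast$ lower semicontinuous convex function with $g(0)=f^\ast(x_0^\ast)\in(\alpha,\beta)$, so that $0\in\textnormal{lev}_g^{<}(\alpha,\beta)=D$. The point to be checked is that $\partial g(x^\ast)\neq\emptyset$ for every $x^\ast\in D$, where $\partial g$ is understood relative to the pairing of $(X^\ast,w^\ast)$ with $X$. Indeed, given $x^\ast\in D$ there is $x\in X$ with $x^\ast\in\partial\tilde f(x)$, i.e. $\tilde f(x)+\tilde f^\ast(x^\ast)=\langle x^\ast,x\rangle$; together with the chain of inequalities $\langle x^\ast,x\rangle-\tilde f^\ast(x^\ast)\leq\tilde f^{\ast\ast}(x)\leq\tilde f(x)$ this forces $\tilde f^{\ast\ast}(x)+\tilde f^\ast(x^\ast)=\langle x^\ast,x\rangle$, which is precisely the assertion $x\in\partial g(x^\ast)$.

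Then, I would apply Lemma \ref{Lemmanuevo2} to $g$, obtaining a weak$^\ast$-neighborhood $U$ of $0$ in $X^\ast$ with $\mathcal{E}(K)\neq\emptyset$ for $K:=U\cap D$. Since $0$ lies in the algebraic interior of both $U$ (being a neighborhood of $0$) and $D$ (by the first step), it lies in $K^{i}$; consequently $0\in K$ and $\mathbb{R}_{+}K=X^\ast$, and as $K\subseteq D$ we also have $(\partial\tilde f)^{-1}(x^\ast)\neq\emptyset$ for all $x^\ast\in K$. All the hypotheses of Theorem \ref{mainresult} being met for the pair $(K,\tilde f)$, that theorem yields that $S_{\tilde f}(\gamma)$ and $\textnormal{lev}_{\tilde f}^{\leq}(\gamma)$ are relatively weakly compact, which, by the identities recorded above, is exactly the conclusion for $S_{f-x_0^\ast}(\gamma)$ and $\textnormal{lev}_{f-x_0^\ast}^{\leq}(\gamma)$.

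I expect the only genuine obstacle to be this appeal to Lemma \ref{Lemmanuevo2}, which is stated for a finite-valued $g:X^\ast\to\mathbb{R}$ whereas $\tilde f^\ast$ may take the value $+\infty$. To deal with this I would either verify that the proofs of Lemmas \ref{Lemmanuevo} and \ref{Lemmanuevo2} remain valid for a weak$^\ast$ lower semicontinuous convex $g:X^\ast\to\mathbb{R}\cup\{+\infty\}$ with $0\in\textnormal{lev}_g^{<}(\alpha,\beta)$ --- extending the auxiliary function $\varPsi$ there by $+\infty$ on $(\beta,+\infty]$ and invoking the chain rule of Lemma \ref{lem-Comp} only at points where $g$ is finite --- or else replace $g$ from the start by a finite-valued weak$^\ast$ lower semicontinuous convex function coinciding with $\tilde f^\ast$ on a weak$^\ast$-neighborhood of $0$. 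The remaining ingredients, namely the translation identities of the first step, the Fenchel--Young computation of the second, and the fact that membership of $0$ in the algebraic interior is preserved under finite intersections, are routine.
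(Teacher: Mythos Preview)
Your proposal is correct and follows essentially the same route as the paper's proof: translate so that the prescribed functional is $0$, apply Lemma \ref{Lemmanuevo2} to $g=\tilde f^{\ast}$ to produce a set $K=U\cap\textnormal{lev}_{\tilde f^{\ast}}^{<}(\alpha,\beta)$ with $\mathcal{E}(K)\neq\emptyset$, observe that $0\in K^{i}$ forces $\mathbb{R}_{+}K=X^{\ast}$, and invoke Theorem \ref{mainresult}. In fact you are more careful than the paper on two points it leaves implicit: you spell out the Fenchel--Young argument showing that $(\partial\tilde f)^{-1}(x^{\ast})\neq\emptyset$ implies $\partial\tilde f^{\ast}(x^{\ast})\cap X\neq\emptyset$, and you flag that Lemma \ref{Lemmanuevo2} is stated for finite-valued $g$ whereas $\tilde f^{\ast}$ may take the value $+\infty$; your suggested fix (extending $\varPsi$ by $+\infty$ beyond $\beta$ and applying Lemma \ref{lem-Comp} only where $g$ is finite) is exactly what is needed, and the paper simply applies the lemma without comment.
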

\begin{proof}
	Take any $x^\ast \in  (\textnormal{lev}_{f^\ast}^{< }(\alpha,\beta))^{i}$. Considering 
	$f-x^*$ in place of $f$, we may and do suppose that $x^*=0$.  Then, the assumption of Lemma 
	\ref{Lemmanuevo2} are satisfied, and then there exists a weak$^\ast$-neighborhood $U$ of zero such that  $\mathcal{E}(U\cap \textnormal{lev}_{f^\ast}^{< }(\alpha,\beta)) \neq \emptyset$. Now since $0 \in  (\textnormal{lev}_{f^\ast}^{< }(\alpha,\beta))^{i}$  we have that $\mathbb{R}_{+}(U\cap \textnormal{lev}_{f^\ast}^{< }(\alpha,\beta)) =X^\ast$, then applying  Theorem \ref{mainresult}  we conclude that  $	S_{f-x^*}(\gamma)$ and 	$\textnormal{lev}_{f-x^\ast}^{\leq}( \gamma)$ are  relatively weakly compact.
\end{proof} 

    The result in the next corollary was first proved by B. Cascales, J. Orihuela and A. P\'erez  
\cite[Theorem 2]{CascOrihPere} for Banach spaces whole dual ball is weak$^{\ast}$ convex block compact, and it has been established for any Banach space by W. B. Moors \cite[Theorem 1]{MR3595216}.   
The corollary extends the result to complete locally convex spaces. 
\begin{corollary}
	Let $A$ and $B$ be nonempty bounded  closed convex sets of $X$ such that $0 \notin \textnormal{ cl} \left( A - B\right)$.
	If every $	x^\ast \in X^\ast$ with  \begin{align} \label{oneside}
	 \sup\limits_{x\in B} \langle x^\ast , x \rangle < \inf\limits_{x \in A} \langle x^\ast , x\rangle
	\end{align}	 attains its infimum on $A$ and  its supremum on $B$, then both $A$ and $B$ are weakly compact. 
	\end{corollary}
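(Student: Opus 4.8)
The plan is to reduce the statement to an application of Theorem~\ref{theoremconic}, applied to a suitable function $f$ built from the indicator functions of $A$ and $B$. The natural candidate is
\begin{equation*}
f(x) := \delta_A \square \big( \delta_{-B} \big)(x) = \inf\{ \delta_A(a) + \delta_{-B}(x-a) : a \in X\} = \delta_{A-B}(x),
\end{equation*}
so that $f$ is the indicator of $A-B$ (or, to ensure $f$ is real-valued as demanded by Theorem~\ref{theoremconic}, one replaces it by $f + c$ for a constant, or argues directly with the epigraphical reformulation since Theorem~\ref{mainresult} only needs $(\partial f)^{-1}$ to be nonempty). Its conjugate is $f^\ast = \sigma_{A-B} = \sigma_A + \sigma_{-B} = \sigma_A - \sigma_B(-\,\cdot\,)$; more usefully, writing things in terms of separate supports, the relevant open sublevel set $\textnormal{lev}_{f^\ast}^{<}(\alpha,\beta)$ should be chosen so that it captures exactly the functionals $x^\ast$ with $\sup_{x\in B}\langle x^\ast,x\rangle < \inf_{x\in A}\langle x^\ast,x\rangle$, i.e. $\sigma_B(-x^\ast) < -\sigma_{-B}(x^\ast)$ reformulated as a strict inequality $f^\ast(x^\ast) > 0$ together with an upper cutoff. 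The hypothesis $0 \notin \textnormal{cl}(A-B)$ is precisely what guarantees, via the separation theorem, that there exists $x_0^\ast$ with $\sigma_{A-B}(-x_0^\ast) < 0$, equivalently that the set of separating functionals has nonempty interior/algebraic interior, which is the non-triviality condition needed to run Theorem~\ref{theoremconic}.

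First I would fix notation: let $C := \textnormal{cl}(A-B)$, a closed convex bounded set with $0\notin C$, and let $f := \delta_C$ (after a harmless vertical shift to make it real-valued, or working epigraphically). Then $f^\ast = \sigma_C$, and for $x^\ast \in X^\ast$ the condition ``$x^\ast$ attains its max over $B$ and its min over $A$'' is inherited by the translation: if $x^\ast$ separates $B$ from $A$ strictly, i.e. $\sigma_B(x^\ast) < \inf_{x\in A}\langle x^\ast,x\rangle = -\sigma_A(-x^\ast)$, then $-x^\ast$ (or $x^\ast$, up to sign conventions) lies in the open sublevel region of $\sigma_C$, and an element of $\partial f(x) = \partial \delta_C(x)$ with $x$ a maximizer/minimizer combination gives $(\partial f)^{-1}(x^\ast)\neq\emptyset$. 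Concretely, if $a\in A$ attains $\inf_A\langle x^\ast,\cdot\rangle$ and $b\in B$ attains $\sup_B\langle x^\ast,\cdot\rangle$, then $a-b$ is a point where $x^\ast$ attains its minimum over $A-B$, hence over $C$ by density and continuity, so $x^\ast \in \partial \delta_C(a-b)^{\dots}$ after the appropriate sign bookkeeping — this is the step where one must be careful that attaining a max/min over $A-B$ transfers to $C=\textnormal{cl}(A-B)$, but boundedness of $A$ and $B$ plus weak-continuity of $\langle x^\ast,\cdot\rangle$ on bounded sets is not automatic, so instead one should observe that $\sigma_C = \sigma_{A-B}$ identically and that a minimizer of a linear functional over $A-B$ is what is produced directly.

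Next I would verify the two hypotheses of Theorem~\ref{theoremconic} for $f = \delta_C$ (shifted): (1) $\textnormal{lev}_{f^\ast}^{<}(\alpha,\beta)$ has nonempty algebraic interior for a suitable choice of real numbers $\alpha<\beta$ — here the lower level $\{f^\ast > \alpha\}$ is used to bound away from the ``useless'' functionals and the $0\notin C$ assumption supplies, via Hahn--Banach separation of $0$ from the closed convex set $C$, a functional $x_0^\ast$ with $\sigma_C(x_0^\ast) < 0$, hence (after sign) a whole cone of functionals on which $f^\ast$ is negative and bounded; by rescaling and using that $f^\ast=\sigma_C$ is positively homogeneous one shows the relevant truncated sublevel set is ``large'' enough to have algebraic interior and to satisfy $\mathbb{R}_+(\text{that set}) = X^\ast$ after intersecting with the appropriate region; (2) $(\partial f)^{-1}(x^\ast)\neq\emptyset$ for all $x^\ast$ in that open sublevel region, which is exactly the translation of the hypothesis that every strictly-separating $x^\ast$ attains its sup on $B$ and inf on $A$, via the identity $\partial\delta_C(x) = N_C(x)$ (the normal cone) and the fact that $x^\ast \in N_C(a-b)$ iff $a-b$ minimizes $-\langle x^\ast,\cdot\rangle$ over $C$. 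Granting these, Theorem~\ref{theoremconic} yields that $\textnormal{lev}_{f-x^\ast}^{\leq}(\gamma)$ is relatively weakly compact for $x^\ast$ in the algebraic interior of that region and every $\gamma$. The final step is to extract weak compactness of $A$ and $B$ individually: one shows $A-b_0 \subseteq \textnormal{lev}_{f-x_0^\ast}^{\leq}(\gamma)$ for a fixed $b_0\in B$, suitable $x_0^\ast$ and $\gamma$, hence $A$ is relatively weakly compact, and symmetrically for $B$; since $A$ and $B$ are closed and convex, hence weakly closed, ``relatively weakly compact'' upgrades to ``weakly compact''.

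The main obstacle I anticipate is the bookkeeping of signs and the precise choice of the pair $(\alpha,\beta)$ and the function $f$ so that (a) $f$ is real-valued (or the epigraphical version of Theorem~\ref{mainresult} is invoked instead of Theorem~\ref{theoremconic} verbatim), (b) the region $\textnormal{lev}_{f^\ast}^{<}(\alpha,\beta)$ genuinely has nonempty algebraic interior — this is where $0\notin\textnormal{cl}(A-B)$ does the essential work through separation, and one must confirm the separating functional is an \emph{interior} point of the separating cone, not merely a boundary point — and (c) the normal-cone/subdifferential identification correctly encodes ``attains sup on $B$ and inf on $A$'' as ``$(\partial f)^{-1}(x^\ast)\neq\emptyset$''. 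A secondary subtlety is passing between $A-B$ and its closure $C$: the cleanest route is to note that $\sigma_{A-B} = \sigma_C$ and that a point of $A-B$ minimizing a linear functional over $A-B$ automatically minimizes it over $C$, so one never actually needs minimizers in $C\setminus(A-B)$, sidestepping any completeness-of-$A-B$ issue. With these points settled the rest is a direct assembly of the earlier results.
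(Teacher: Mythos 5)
Your proposal is correct and follows essentially the same route as the paper: the paper also sets $f=\delta_{C}$ with $C=B-A$ (the opposite sign convention to yours, but equivalent), identifies $f^{\ast}=\sigma_{C}$, observes that the strictly separating functionals form exactly the negative sublevel set of $\sigma_C$ — nonempty with nonempty algebraic interior thanks to $0\notin\mathrm{cl}(A-B)$ and the boundedness of $A$ and $B$ — verifies $(\partial f)^{-1}(x^\ast)\neq\emptyset$ there from the attainment hypothesis, and invokes Theorem~\ref{theoremconic} to get that $C$ sits inside a relatively weakly compact sublevel set, whence $A$ and $B$ are weakly compact. The technical caveats you flag (real-valuedness of $f$, the $A-B$ versus $\mathrm{cl}(A-B)$ issue) are handled correctly and are, if anything, treated more carefully than in the paper's own two-line argument.
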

\begin{proof}
	Consider the set $C:=B-A$, and the function $f: = \delta_{C}$,  so $f^\ast = \sigma_{C}$. 
	
	We claim that  the set $\textnormal{lev}_{f^\ast}^{< }(0)=\textnormal{lev}_{f^\ast}^{< }(-\infty,0) $   has nonempty algebraic interior and $\left(\partial f	 \right)^{-1} (x^\ast) \neq \emptyset $ for all $x^\ast \in \textnormal{lev}_{f^\ast}^{< }(0)$.  On the one hand, to see that the set $\textnormal{lev}_{f^\ast}^{< }(0)$ has nonempty algebraic interior, by the assumption $0 \notin \textnormal{ cl} \left( A - B\right)$ choose some $x_0^\ast \in X^\ast$ such that $\sigma_{C}(x^\ast_0) <0$. Take any $x^{\ast}  \in X^\ast$. The boundedness of $C$ ensures that on $\mathbb{R}$ the convex function $h$ defined for all $t\in\mathbb{R}$ 
by  $h(t):=\sigma_C(x_0^{\ast} + t x^{\ast} )$ is finite, hence  continuous.  Then, by the inequality 
$h(0)<0$ there is a real $\delta>0$ such that 
	$\sigma_C(x^{\ast}_0+tx^{\ast})=h(t)<0$ for all $t\in[-\delta,\delta]$, which means that 
	$x^{\ast}_0+[-\delta,\delta]x^*\subset \textnormal{lev}_{f^\ast}^{< }(0)$. This says that 
	$x^{\ast}_0$ belongs to the algebraic interior of $\textnormal{lev}_{f^\ast}^{< }(0)$. 
	On the other hand, the set $\textnormal{lev}_{f^\ast}^{< }(0)$ is just the set of all points $x^\ast \in X^\ast$ such that  \eqref{oneside} holds, and  	we notice by \eqref{eq-SubdInvConj} and \eqref{eq-DefMRSubd} 
	that 
$$ 
  \left(\partial f	 \right)^{-1} (x^\ast)= \partial f^\ast(x^\ast)=\{ x\in C : \sigma_C(x^\ast)
	= \langle x^\ast , x\rangle  \}.
	$$ 
Hence, by the attainment assumption related to \eqref{oneside} the set $\left(\partial f	 \right)^{-1} (x^\ast)$ is non-empty for all $x^\ast \in \textnormal{lev}_{f^\ast}^{< }(0)$. The claim is then justified. 
	
	 Thus, Theorem  \ref{theoremconic} allows us to conclude that  for all $x^\ast \in (\textnormal{lev}_{f^\ast}^{< }(0))^{i}$ and $\gamma \in \mathbb{R}$  the set	$\textnormal{lev}_{f-x^\ast}^{\leq}( \gamma)$ is relatively weakly compact. In particular fixing $x_0^\ast \in (\textnormal{lev}_{f^\ast}^{< }(0))^{i}$ and putting
	$\gamma_0 := \sigma_{C} (-x_0^\ast)$ we have 
	that $C \subseteq \textnormal{lev}_{f-x_0^\ast}^{\leq}( \gamma_0)$. Therefore $A$ and $B$ are  relatively weakly compact.
	\end{proof}  

Now we show the mentioned extension of Theorem \ref{theorem1} using  Theorem \ref{mainresult}.  

\begin{theorem}\label{mainresult2} Let $U$ be a nonempty open set  in $X^{\ast}$ with respect to the  Mackey topology and 
	let $f: X \to \mathbb{R} \cup \{ +\infty \}$ be a function such that  $f-x^\ast$ attains its minimum for every $x^\ast \in U$. Then for every $\gamma  \in \mathbb{R}$ and every $x^\ast \in U$ the sets 
	\begin{align*}
	S_{f-x^*}(\gamma)&=\{  (x,\alpha) \in X\times \mathbb{R} :	f(x) - \langle x^\ast , x \rangle \leq \alpha \leq \gamma		\},\\
	\textnormal{lev}_{f-x^\ast}^{\leq}( \gamma)&=\{ 		x\in X : f(x) -\langle x^\ast , x \rangle \leq  \gamma		\}
	\end{align*}
	are relatively weakly compact.  In particular $f$ is an epi-pointed function.
\end{theorem}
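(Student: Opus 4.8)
\emph{Plan.} The idea is to reduce the first part to Theorem \ref{mainresult} by a translation in $X^\ast$ together with Lemma \ref{Key:lemma}, and then to obtain epi-pointedness by passing to the closed convex hull of $f$ and invoking Moreau's Theorem \ref{theorem0}.

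For the first part, fix $x^\ast\in U$ and put $\tilde f:=f-x^\ast$, so that $\tilde f-y^\ast$ attains its minimum — equivalently $(\partial\tilde f)^{-1}(y^\ast)\neq\emptyset$ — for every $y^\ast$ in the Mackey-open neighbourhood $V:=U-x^\ast$ of the origin. Using the description of $\tau(X^\ast,X)$ recalled in Section \ref{Section2}, I would pick a weakly compact circled convex set $W\subseteq X$ with $W^{o}\subseteq V$ and set $K:=W^{o}$. Then $0\in K$, and $\mathbb{R}_{+}K=X^\ast$ because $W$, being weakly compact, is bounded, so $\sigma_W$ is finite and each $y^\ast$ has a positive multiple lying in $W^{o}$. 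Since $W$ is relatively weakly compact and $\mathbb{R}_{+}W^{o}=X^\ast$, Lemma \ref{Key:lemma} gives $\mathcal{E}(K)=\mathcal{E}(W^{o})\neq\emptyset$; moreover $(\partial\tilde f)^{-1}(y^\ast)\neq\emptyset$ for every $y^\ast\in K\subseteq V$. Thus Theorem \ref{mainresult} applies to $\tilde f$ and $K$ and shows that $S_{\tilde f}(\gamma)$ and $\textnormal{lev}_{\tilde f}^{\leq}(\gamma)$ are relatively weakly compact for every $\gamma\in\mathbb{R}$. Since $S_{\tilde f}(\gamma)=S_{f-x^\ast}(\gamma)$ and $\textnormal{lev}_{\tilde f}^{\leq}(\gamma)=\textnormal{lev}_{f-x^\ast}^{\leq}(\gamma)$, this is precisely the first assertion.

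For the epi-pointedness, fix $x_0^\ast\in U$ and note that the hypothesis forces $f$ to be proper — indeed $f$ is minorised by the affine function $\langle x_0^\ast,\cdot\rangle-f^\ast(x_0^\ast)$ with $f^\ast(x_0^\ast)=-\inf_X(f-x_0^\ast)\in\mathbb{R}$ — so $f^\ast$ is proper and in fact finite on $U$. Let $F:=\overline{\textnormal{co}}f$, which is then proper, lower semicontinuous and convex with $F^\ast=f^\ast$. For $y^\ast\in U$ one has $\inf_X(F-y^\ast)=-F^\ast(y^\ast)=-f^\ast(y^\ast)=\inf_X(f-y^\ast)$, and any minimiser $x_{y^\ast}$ of $f-y^\ast$ also minimises $F-y^\ast$ since $F\leq f$; hence $F$ satisfies the hypothesis of the theorem relative to $U$. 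Applying the part already proven to $F$ shows that $\textnormal{lev}_{F-x_0^\ast}^{\leq}(\gamma)$ is relatively weakly compact for every $\gamma$, and being weakly closed (as $F$ is weakly lower semicontinuous) it is weakly compact. Theorem \ref{theorem0}, applied to the proper lsc convex function $F$ at $x_0^\ast$, now yields that $F^\ast=f^\ast$ is finite and $\tau(X^\ast,X)$-continuous at $x_0^\ast\in\textnormal{dom}f^\ast$, i.e. $f$ is epi-pointed.

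The main obstacle is this last clause. One would like to argue directly that $\overline{\textnormal{co}}f$ inherits relatively weakly compact sublevel sets from $f$, but that is false in general (for instance $u\mapsto\sqrt{|u|}$ on $\mathbb{R}$ has compact sublevels while its convex hull vanishes identically). What $\overline{\textnormal{co}}f$ does inherit is the extremal property that $F-y^\ast$ attains its minimum for all $y^\ast\in U$, and the trick is to exploit this to re-apply the main theorem to $F$ before invoking Moreau's result. The remaining ingredients — the translation in $X^\ast$, recognising $K$ as the polar of a weakly compact set so that Lemma \ref{Key:lemma} applies, and the conjugacy bookkeeping — are routine.
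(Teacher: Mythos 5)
Your proof is correct and follows essentially the same route as the paper: translate so that $0\in U$, extract from the definition of the Mackey topology a weakly compact circled convex set $W$ with $W^{o}\subseteq U$, invoke Lemma \ref{Key:lemma} to get $\mathcal{E}(W^{o})\neq\emptyset$, and apply Theorem \ref{mainresult}. For the epi-pointedness the paper simply writes that Moreau's Theorem gives Mackey-continuity of $f^*$ at zero; your detour through $F=\overline{\textnormal{co}}f$ --- transferring the attainment property to $F$, re-applying the first part to $F$, and only then invoking Theorem \ref{theorem0} --- is a more careful justification of that step, since Moreau's theorem as stated applies only to proper lower semicontinuous convex functions and, as your $\sqrt{|u|}$ example shows, weak compactness of sublevel sets does not pass directly from $f$ to $\overline{\textnormal{co}}f$.
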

\begin{proof}
Considering $x_0^\ast \in U$ and the transformation of the function $f \to f-x_0^\ast$, we may suppose that $0\in U$.  From the definition of the Mackey topology, there {exists a nonempty} convex circled weakly compact subset $K$ of $X$  such that  $K^{\circ} \subset U$, thus $f-x^\ast$ attains its minimum for every  $x^\ast \in K^{\circ}$,  
	which  is also equivalent to $\big( \partial f \big)^{-1}(x^\ast)\neq \emptyset$ for all  
$x^\ast \in K^{\circ}$. Noting also that $\mathbb{R}_+K^{\circ}=X^{\ast}$,  
by Lemma \ref{Key:lemma} we can take $\varphi \in \mathcal{E}(K^{\circ})$, then by Theorem \ref{mainresult} the sets  $S_f(\gamma)$	 and 	$\textnormal{lev}_f^{\leq}( \gamma)$  are  relatively weakly compact.   Finally, by Moreau's Theorem (see Theorem \ref{theorem0}), $f^*$ is Mackey-continuous at zero, and consequently $f$ is epi-pointed. 
\end{proof}

  From the above theorem we deduce the following extension  of  
	J. Orihuela's result in \cite[Theorem 2]{Orihuela} to  complete locally convex spaces.  
	The interest for such a result probably began with E. Jouini, W. Schachermayer and 
	N. Touzy \cite{Joui-Scha-Touz}. 
	
\begin{corollary}
	Let $D$ be a nonempty weakly compact subset of $X$ with $0\notin D$. If $A$ is a bounded subset of $X$ such that every $x^\ast \in X^\ast$ with $\inf_{x\in D} \langle x^\ast ,x \rangle >0$ attains its supremum on $A$, then $A$ is relatively weakly compact. 
\end{corollary}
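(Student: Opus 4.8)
The plan is to reduce the statement to Theorem~\ref{mainresult2} applied to the indicator function $f:=\delta_A$. For $x^\ast\in X^\ast$ one has $(f-x^\ast)(x)=-\langle x^\ast,x\rangle$ for $x\in A$ and $(f-x^\ast)(x)=+\infty$ otherwise, so $f-x^\ast$ attains its minimum on $X$ \emph{if and only if} $x^\ast$ attains its supremum on $A$. Thus the hypothesis says exactly that $f-x^\ast$ attains its minimum for every $x^\ast$ in
\begin{align*}
U:=\Big\{x^\ast\in X^\ast:\ \inf_{x\in D}\langle x^\ast,x\rangle>0\Big\}.
\end{align*}
Assuming (see below) that $U$ is a nonempty Mackey-open subset of $X^\ast$, Theorem~\ref{mainresult2} gives that $\textnormal{lev}_{f-x^\ast}^{\leq}(\gamma)$ is relatively weakly compact for every $\gamma\in\mathbb{R}$ and every $x^\ast\in U$. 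Fixing one $x_0^\ast\in U$ and using that $A$ is bounded, hence $\langle x_0^\ast,\cdot\rangle$ is bounded on $A$, say $|\langle x_0^\ast,x\rangle|\le M$ for all $x\in A$, one has $A=\textnormal{lev}_{f-x_0^\ast}^{\leq}(\gamma)$ for every $\gamma\ge M$; therefore $A$ is relatively weakly compact.

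It remains to check the two properties of $U$. \emph{Nonemptiness} follows by separating the origin from $D$: since $D$ is weakly compact and $0\notin D$, a Hahn--Banach separation (applied to the weakly closed convex hull of $D$) produces some $x_0^\ast\in X^\ast$ with $\inf_{x\in D}\langle x_0^\ast,x\rangle>0$, i.e. $x_0^\ast\in U$. \emph{Mackey-openness} is the main point. Since $X$ is complete, Krein's theorem guarantees that the closed convex hull $\widetilde D:=\overline{\textnormal{co}}\big(D\cup(-D)\big)$ is weakly compact; it is moreover convex and balanced, so its polar $\widetilde D^{\,o}$ is a basic $\tau(X^\ast,X)$-neighborhood of $0$. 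Given $x_0^\ast\in U$, put $\epsilon:=\inf_{x\in D}\langle x_0^\ast,x\rangle>0$; then for every $x^\ast\in x_0^\ast+\big(2\epsilon^{-1}\widetilde D\big)^{o}$ and every $x\in D\subseteq\widetilde D$ one has $|\langle x^\ast-x_0^\ast,x\rangle|\le\epsilon/2$ (here the balancedness of $\widetilde D$ is used to get the two-sided bound), whence $\langle x^\ast,x\rangle\ge\epsilon/2>0$. Taking the infimum over $x\in D$ shows $x^\ast\in U$, so $x_0^\ast+\big(2\epsilon^{-1}\widetilde D\big)^{o}\subseteq U$ and $U$ is Mackey-open.

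The step I expect to be the main obstacle is the Mackey-openness of $U$: it is precisely there that the completeness of $X$ is needed, via Krein's theorem, to ensure that the closed absolutely convex hull of the weakly compact set $D$ is again weakly compact and hence yields a genuine $\tau(X^\ast,X)$-neighborhood. (A minor caveat: if $D$ is not assumed convex, the hypothesis should be read as $0\notin\overline{\textnormal{co}}(D)$, since otherwise $U$ is empty and Theorem~\ref{mainresult2} cannot be invoked.)
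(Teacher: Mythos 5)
Your proof is correct and follows essentially the same route as the paper: take $f=\delta_A$ and the same set $U=\{x^\ast:\inf_{x\in D}\langle x^\ast,x\rangle>0\}$, apply Theorem~\ref{mainresult2}, and capture $A$ inside a sublevel set of $f-x_0^\ast$ for a fixed $x_0^\ast\in U$. You additionally supply the Krein's-theorem argument for the Mackey-openness of $U$ (which the paper only asserts) and correctly observe that the hypothesis must be read as $0\notin\overline{\textnormal{co}}(D)$ for $U$ to be nonempty, a point the paper's proof leaves implicit.
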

\begin{proof}
	Consider the set $U:=\{ x^\ast \in X^\ast: 	\inf_{x\in D} \langle x^\ast ,x \rangle >0	\}$.  Due to the fact that $D$ is weakly compact, the set $U$ is an open set in $X^*$ with respect to the Mackey-topology. Then, applying Theorem \ref{mainresult2} with $f:= \delta_{A}$, we get that for every $\gamma  \in \mathbb{R}$ and every $x^\ast \in U$ the set  $\textnormal{lev}_{f-x^\ast}^{\leq}( \gamma)$ is relatively weakly compact. In particular, fixing a point $x^\ast_0\in U$ and putting  
	$\gamma_0 :=\sigma_{A}(-x^\ast_0)$, we have that   $A \subseteq \textnormal{lev}_{f-x_0^\ast}^{\leq}( \gamma_0)$, which implies that $A$ is relatively weakly compact.
\end{proof}
\begin{corollary}\label{corollary3.6}
	Let  $f:X\to \mathbb{R}\cup\{ +\infty\}$ be a proper lower semicontinuous  convex function and let $U$ be a nonempty  open set in $ X^\ast$ with respect to the Mackey topology. Then  the following statements are equivalent:
	\begin{enumerate}[label={(\alph*)},ref={(\alph*)}]
		\item\label{a} $f$ is epi-pointed and $U \subseteq \textnormal{dom} f^\ast$; 
		\item\label{b} $f-x^\ast$ is weakly inf-compact for all $x^\ast \in U$; 
		\item\label{c} $\big(\partial f \big)^{-1}(x^\ast) \neq \emptyset$, for all $x^\ast \in  U$.
	\end{enumerate}
\end{corollary}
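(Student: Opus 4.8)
The plan is to establish the cycle \ref{a} $\Rightarrow$ \ref{b} $\Rightarrow$ \ref{c} $\Rightarrow$ \ref{a}, exploiting that $f$ is now assumed proper, lower semicontinuous and convex, so that the biconjugate identity $f=f^{\ast\ast}$ holds and Moreau's Theorem (Theorem~\ref{theorem0}) is available in full strength. For \ref{a} $\Rightarrow$ \ref{b}: if $f$ is epi-pointed then $f^\ast$ is proper and $\tau(X^\ast,X)$-continuous at some point $x_0^\ast$ of its domain; since $f^\ast$ is convex, continuity at one interior point of the domain propagates, so $f^\ast$ is $\tau(X^\ast,X)$-continuous on $\operatorname{Int}_{\tau(X^\ast,X)}(\operatorname{dom} f^\ast)$, and the hypothesis $U\subseteq\operatorname{dom} f^\ast$ together with $U$ being Mackey-open forces $U\subseteq \operatorname{Int}_{\tau(X^\ast,X)}(\operatorname{dom} f^\ast)$, hence $f^\ast$ is finite and Mackey-continuous at every $x^\ast\in U$. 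By Moreau's Theorem (the ``only if'' applied pointwise, i.e. the characterization in Theorem~\ref{theorem0} together with the standard fact recalled in assertion (c) of the proof of Lemma~\ref{Key:lemma}) this is exactly the statement that $f-x^\ast$ is weakly inf-compact for every $x^\ast\in U$.

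For \ref{b} $\Rightarrow$ \ref{c}: weak inf-compactness of $f-x^\ast$ in particular gives that the sublevel set $\operatorname{lev}_{f-x^\ast}^{\leq}(\gamma)$ is nonempty weakly compact for $\gamma$ slightly above $\inf_X(f-x^\ast)$ (this infimum is finite: it is $-f^\ast(x^\ast)>-\infty$ because $x^\ast\in U\subseteq\operatorname{dom} f^\ast$, using \ref{b} to rule out $+\infty$ as well — a proper lsc convex function that is inf-compact after a linear perturbation is certainly proper). Since $f-x^\ast$ is lower semicontinuous for the weak topology (it is lsc convex, hence weakly lsc) and its sublevel sets are weakly compact, $f-x^\ast$ attains its minimum on $X$; equivalently the minimizer lies in $(\partial f)^{-1}(x^\ast)$, giving \ref{c}.

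For \ref{c} $\Rightarrow$ \ref{a}: this is where Theorem~\ref{mainresult2} does the work. The hypothesis $(\partial f)^{-1}(x^\ast)\neq\emptyset$ for all $x^\ast\in U$ is precisely the statement that $f-x^\ast$ attains its minimum for every $x^\ast\in U$, so Theorem~\ref{mainresult2} applies and yields, in its last sentence, that $f$ is epi-pointed; moreover attaining a minimum of $f-x^\ast$ forces $f^\ast(x^\ast)=\langle x^\ast,x\rangle - f(x)<+\infty$ for the minimizer $x$, so $U\subseteq\operatorname{dom} f^\ast$, which is \ref{a}.

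The main obstacle I anticipate is the implication \ref{a} $\Rightarrow$ \ref{b}: one must be careful that epi-pointedness only guarantees Mackey-continuity of $f^\ast$ at \emph{some} point of $\operatorname{dom} f^\ast$, and upgrading this to continuity on all of $U$ requires the convexity argument that continuity at an interior point of the domain of a convex function spreads over the interior, plus the observation that a Mackey-open set contained in $\operatorname{dom} f^\ast$ is automatically contained in its Mackey-interior. Once continuity on $U$ is secured, invoking Moreau's Theorem pointwise is routine; the other two implications are essentially immediate given Theorem~\ref{mainresult2} and standard convex-analytic facts.
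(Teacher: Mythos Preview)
Your proof is correct and follows essentially the same strategy as the paper's: Moreau's Theorem (Theorem~\ref{theorem0}) handles the link between \ref{a} and \ref{b}, the passage \ref{b} $\Rightarrow$ \ref{c} is immediate, and Theorem~\ref{mainresult2} closes the loop. The only cosmetic difference is that the paper records \ref{a} $\Leftrightarrow$ \ref{b} directly (observing that \ref{a} is equivalent to Mackey-continuity of $f^\ast$ on $U$) and then uses the sublevel-set conclusion of Theorem~\ref{mainresult2} for \ref{c} $\Rightarrow$ \ref{b}, whereas you arrange the argument as the cycle \ref{a} $\Rightarrow$ \ref{b} $\Rightarrow$ \ref{c} $\Rightarrow$ \ref{a} and invoke the epi-pointedness conclusion of Theorem~\ref{mainresult2} instead; the content is the same.
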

\begin{proof}
	 Noting that $\ref{a}$ is equivalent to saying that $f^*$ is Mackey-continuous on the open set $U$, we see that the equivalence between \ref{a} and \ref{b}  follows from  Moreau's Theorem 
	(see Theorem \ref{theorem0}).

	On the other hand $\ref{b} \Rightarrow \ref{c}$ is straightforward. Finally,  
	$\ref{c} \Rightarrow \ref{b}$ is given by  Theorem \ref{mainresult2}. 
\end{proof}

\begin{example}
	{\rm 
	Consider any  nonempty weakly compact circled convex set $K\subseteq X$ such that $K^\circ \neq X^\ast$. The function $f:=\sigma_{K^\circ}$  satisfies the assumption of Theorem \ref{mainresult2} 
	with $U:=K^{\circ}_s$. However, it does not satisfy the assumption of Theorem \ref{theorem1}, because the effective domain of the subdifferential of $f^*=\delta_{K^\circ}$ is not the 
	whole space $X^*$.  } 
	\hfill $\square$
\end{example}

\begin{example}
	{\rm Consider a non-reflexive Banach space $X$. 
Define $f := \| \cdot\|$ on $X$ and note that its conjugate function is given by $f^\ast = \delta_{\mathbb{B}^\ast}$,  where $\mathbb{B}^\ast$ represents the closed unit ball in $X^\ast$ centered at zero.  Moreover,  for every 
	$x^\ast\in \mathbb{B}^\ast$ one has that $f^\ast (x^\ast) =  \langle x^\ast, 0 \rangle - f(0)$, that is, $f-x^*$ attains its minimum. Nevertheless, $\mathbb{B}=\{  x \in X : f(x) \leq 1 \}$ is not weakly compact.  This example shows that the assumption  $\mathcal{E}(\mathbb{B}^\ast)\neq \emptyset$ is crucial 
 in Theorem \ref{mainresult}.  } 
\hfill $\square$
\end{example} 

The last example shows that Theorem \ref{mainresult2} cannot be extended to the case when $U$  
therein is the open/closed unit ball in the dual space of a Banach space. However, we can establish the following characterization of semi-reflexivity in terms of the non-emptiness of  $\mathcal{E}(B^\circ)$  for some family of bounded sets $B$ in $X$. We recall that the strong topology  $\beta(X^{\ast},X)$ on $X^\ast$, denoted by  $\beta$ for short,  is the topology generated by the uniform convergence over bounded sets of $X$. The bidual of $X$, denoted by $X^{\ast \ast }$,  is the topological dual of $(X^\ast,\beta)$. The locally convex space $X$ is called \emph{semi-reflexive} if the canonical embedding (or evaluation mapping) $X \ni x \to  \langle \cdot , x\rangle  \in X^{\ast \ast}$ is onto. In contrast, $X$ is called \emph{reflexive} if  the canonical embedding is a homeomorphism (see  \cite{MR1741419} for more details).  It is worth mentioning that every semi-reflexive normed space is a reflexive Banach space (see \cite[Corollary 2, \S IV p. 145]{MR1741419}).

\begin{corollary}
	The space  $X$ is semi-reflexive if and only if $\mathcal{E}(B^\circ)\neq \emptyset$ for every bounded  circled convex  set $B$  in $X$. 
\end{corollary}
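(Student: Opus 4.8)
The plan is to reduce the statement to Lemma~\ref{Key:lemma} together with the classical characterisation of semi-reflexivity: \emph{a locally convex space $X$ is semi-reflexive if and only if every bounded subset of $X$ is relatively $\sigma(X,X^\ast)$-compact} (see, e.g., \cite{MR1741419}). Before using the lemma I would record the mild technical point that makes it applicable here: if $B$ is a bounded circled convex subset of $X$, then $\sigma_B(x^\ast)<+\infty$ for every $x^\ast\in X^\ast$, hence $x^\ast/(1+\sigma_B(x^\ast))\in B^\circ$, and therefore $\mathbb{R}_{+}B^\circ=X^\ast$. Thus the hypotheses of Lemma~\ref{Key:lemma} are automatically satisfied for $K=B$, for every bounded circled convex set $B$.

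For the forward implication, suppose $X$ is semi-reflexive and let $B$ be a bounded circled convex set. By the characterisation above, $B$ is relatively weakly compact; since $\mathbb{R}_{+}B^\circ=X^\ast$, Lemma~\ref{Key:lemma} yields $\mathcal{E}(B^\circ)\neq\emptyset$.

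For the converse, suppose $\mathcal{E}(B^\circ)\neq\emptyset$ for every bounded circled convex set $B$, and let $A\subseteq X$ be an arbitrary bounded set. Put $B:=\overline{\textnormal{co}}\,(A\cup(-A))$, the closed absolutely convex hull of $A$; using a neighbourhood base of the origin consisting of closed convex balanced sets one sees that $B$ is bounded, and it is circled and convex by construction. By hypothesis $\mathcal{E}(B^\circ)\neq\emptyset$, and since $\mathbb{R}_{+}B^\circ=X^\ast$, Lemma~\ref{Key:lemma} shows that $B$---and hence $A\subseteq B$---is relatively weakly compact. As $A$ was arbitrary, every bounded subset of $X$ is relatively weakly compact, so $X$ is semi-reflexive.

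The only real obstacle is to state and quote correctly the classical weak-compactness characterisation of semi-reflexivity and to check that the condition $\mathbb{R}_{+}K^\circ=X^\ast$ of Lemma~\ref{Key:lemma} holds on both sides; once this is done, each implication is a one-line application of the lemma. The one routine verification to spell out is the boundedness of $\overline{\textnormal{co}}\,(A\cup(-A))$ for bounded $A$, which follows immediately from the fact that $X$ has a base of closed convex balanced neighbourhoods of the origin.
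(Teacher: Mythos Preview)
Your proof is correct and follows essentially the same route as the paper's: both directions reduce to Lemma~\ref{Key:lemma} via the classical characterisation of semi-reflexivity by relative weak compactness of bounded sets. You are in fact slightly more careful than the paper in explicitly verifying the hypothesis $\mathbb{R}_{+}B^\circ=X^\ast$ of the lemma, which the paper leaves implicit.
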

\begin{proof}
	By \cite[Theorem 5.5,  \S IV p. 144]{MR1741419}  we know that  a locally convex space $X$ is semi-reflexive if and only if every bounded subset of $X$ is relatively weakly compact. 
	
	   First, assume that $X$ is semi-reflexive and fix  any nonempty bounded circled convex set $B$ in $X$, so $B$ is relatively weakly compact. 
	Moreover	$\mathbb{R}_+B^{\circ}=X^*$. Indeed, taking any $x^\ast \in X^\ast$, by the boundedness of $B$ there exists some $\lambda >0$ such that 
\begin{align*}
\lambda B \subseteq V:=\{x^\ast\}^\circ =\{ x\in X:  \langle x^\ast , x \rangle \leq 1\}.
\end{align*}	
Then $\mathbb{R}_+B^{\circ}\supseteq \lambda^{-1}B^\circ \supseteq V^\circ \ni x^\ast$, 
which confirms by the arbitrariness of $x^\ast$ that $\mathbb{R}_+B^{\circ}=X^*$. Then, by Lemma \ref{Key:lemma} the set $\mathcal{E}(B^\circ)$ is  nonempty.   

   Conversely, take a nonempty bounded circled convex set $B$ of $X$ with $\mathcal{E}(B^{\circ})\neq\emptyset$. By Lemma \ref{Key:lemma} the set $B$ is relatively weakly compact. Since the boundedness is preserved under   circled convex hull, we deduce under the property of the corollary that every bounded subset of $X$ is relatively compact, so $X$ is semi-reflexive.  
\end{proof}

%    Bibliographies can be prepared with BibTeX using amsplain,
%    amsalpha, or (for "historical" overviews) natbib style.
\bibliographystyle{plain}

\begin{thebibliography}{}

\end{thebibliography}


\begin{thebibliography}{10}
	\bibitem{MR1416513} 
	Benoist, J. and  Hiriart-Urruty, J.-B. \textit{
	What is the subdifferential of the closed convex hull of a function?} 
	SIAM J. Math. Anal. 27 (1996), no. 6, 1661--1679. 
	
	\bibitem{CascOrihPere}  Cascales, B.,  Orihuela, J. and P\'erez, A., \textit{One-sided James compactness theorem.} J. Math. Anal. Appl. 445 (2017), {no. 2}, 1267-1283. 
	
		\bibitem{CastaingValadier} Castaing, C. and  Valadier, M.
	\textit{Convex Analysis and Measurable Multifunctions.}  
	Lecture Notes in Mathematics, Vol. 580. Springer-Verlag, Berlin-New York, (1977).
	
	\bibitem{MR1721727} Combari, C., Laghdir, M., Thibault, L.,
	\textit{On subdifferential calculus for convex functions defined on locally convex spaces.} English, French summary
	Ann. Sci. Math. Québec 23 (1999), no. 1, 23–36. 
	
	\bibitem{MR3507100} Correa, R., Hantoute, A. and  P\'erez-Aros, P. 
	\textit{On the Klee--Saint Raymond's characterization of convexity.}
	SIAM J. Optim. 26 (2016), no. 2, 1312--1321. 
	
	
	\bibitem{Correa2017} Correa, R., Hantoute, A. and P{\'e}rez-Aros, P. 
	\textit{On {B}r{\o}ndsted--{R}ockafellar's {T}heorem for convex lower semicontinuous epi-pointed functions in locally convex spaces.} Math. Program.  {1--2, Ser. B} (2017), 631--643.
	
	\bibitem{Correa2018} Correa, R., Hantoute, A. and P{\'e}rez-Aros, P. \textit{MVT, integration, and monotonicity of a generalized subdifferential
		in locally convex spaces}. {Submitted. }  

\bibitem{Correa20182} Correa, R., Hantoute, A. and P{\'e}rez-Aros, P. \textit{Qualification conditions-free  
	characterizations of the $\varepsilon$-subdifferential of convex integral	functions}. {Submitted. }

		\bibitem{Correa20183} Correa, R., Hantoute, A. and P{\'e}rez-Aros, P. \textit{Characterizations of the subdifferential of convex integral
functions under qualification conditions}. {Submitted. }
		
	
	\bibitem{MR3509670} Correa, R., Hantoute, A. and  Salas, D. 
	\textit{Integration of nonconvex epi-pointed functions in locally convex spaces.}
	J. Convex Anal. \textit{23} (2016), no. 2, 511--530. 
	
	\bibitem{MR0170192} 	James, R. C.
	\textit{	Characterizations of reflexivity.}
	Studia Math. 23 (1963/1964), 205--216.
	
	\bibitem{MR0165344} James, R. C.
	\textit{Weakly compact sets.} 
	Trans. Amer. Math. Soc. 113 (1964), 129--140.  
	
	\bibitem{Joui-Scha-Touz} Jouini, E.,  Schachermayer, W.  and Touzi, N.  
 \textit{Law invariant risk measures have the Fatou property.} Adv. Math. Econ. 9 (2006), 49--71.
	
	\bibitem{Laurent} Laurent, P.J.  \textit{Approximation et Optimisation.} {Hermann} (1972).
	
	\bibitem{Moreau} Moreau, J.J. {\textit{Fonctionnelles Convexes.} S\'eminaire} Jean Leray 2 (1967) 1--108.
	
\bibitem{MR3652791} Moors, W. B. Weak compactness of sublevel sets. Proc. Amer. Math. Soc. 145 (2017), no. 8, 3377--3379. 
	
\bibitem{MR3595216} Moors, W. B.
\textit{On a one-sided James' theorem.}
	J. Math. Anal. Appl. 449 (2017), no. 1, 528–-530. 
 
	
		\bibitem{Orihuela} Orihuela, J. \textit{Conic James' compactness theorem.}
	J. Convex Anal. \textit{25} (2018), no. 4. 
	
		\bibitem{Perez1} P\'erez-Aros,  P. \textit{Formulae for the  conjugate and the subdifferential  of the  supremum function.} {To appear in J. Optim. Theory Appl .} 
	
	\bibitem{MR1451876} Rockafellar, R. T.
	\textit{Convex analysis.}  Reprint of the 1970 original. Princeton Landmarks in Mathematics. Princeton Paperbacks. Princeton University Press, Princeton, NJ, (1997).
	
		\bibitem{MR3045687}  Saint Raymond, J.  \textit{Weak compactness and variational characterization of the convexity.} Mediterr. J. Math. 10 (2013), no. 2, 927--940.
	
	\bibitem{MR1741419}  
	Schaefer, H. H. and  Wolff, M. P.
	\textit{Topological Vector Spaces.} 
	Second edition. Graduate Texts in Mathematics, 3. Springer-Verlag, New York, (1999).
	
	
	
\end{thebibliography}
%\bibliographystyle{amsalpha}

%    Insert the bibliography data here.

%\end{linenumbers}
\end{document}